\newtheorem{thm}{Theorem}[section]
\newtheorem{lem}[thm]{Lemma}
\newtheorem{prop}[thm]{Proposition}
\theoremstyle{definition}
\newtheorem{dfn}[thm]{Definition}
\newtheorem{rem}[thm]{Remark}
\newtheorem{exam}[thm]{Example}
\DeclareMathOperator{\PG}{PG}
\DeclareMathOperator{\PSL}{PSL}
\DeclareMathOperator{\PGL}{PGL}
\newcommand{\F}{\mathbb{F}}
\begin{document}
\title{$3$-Designs from $\PSL(2,q)$ with cyclic starter blocks}

\author{
  Akihide Hanaki\footnote{
    A. Hanaki : Faculty of Science,
    Shinshu University, Matsumoto 390-8621, Japan, 
    e-mail address : hanaki@shinshu-u.ac.jp}
  \and Kenji Kobayashi\footnote{
    K. Kobayashi : Graduate School of Science and Technology,
    Shinshu University, Matsumoto 390-8621, Japan,
    e-mail address : 23ss106c@shinshu-u.ac.jp}
  \and Akihiro Munemasa\footnote{
    A. Munemasa : Graduate School of Information Sciences,
    Tohoku University, Sendai 980-8579, Japan
    e-mail address : munemasa@tohoku.ac.jp}
}
\date{}
\maketitle

\begin{abstract}
  We consider when the projective special linear group
  over a finite field defines a 
  block-transitive
  $3$-design
  with a starter block
  which is a multiplicative subgroup of the field.
  For a prime power $q\equiv 1\pmod{20}$,
  we will show that the multiplicative subgroup of order $5$
  is a starter block of a $3$-design if and only if
  the multiplicative subgroup of order $10$
  is a starter block of a $3$-design.
  The former is the family of
  $3$-$(q+1,5,3)$ investigated by Li, Deng and Zhang,
  while the latter appear in a different context by
  Bonnecaze and Sol\'e for the case $q=41$.
  We also show a similar equivalence for multiplicative subgroups
  of orders $13$ and $26$ 
  for a prime power $q\equiv 1\pmod{52}$.
\end{abstract}

\noindent
keywords: $3$-design, projective special linear group\\
MSC2020: primary {05B05}

\section{Introduction}
Bonnecaze and Sol\'e \cite{MR4275361} found that the extended quadratic residue
code of length $42$ supports a (seemingly sporadic) $3$-$(42,10,18)$ design. 
It turns out that this design has $\mathrm{PSL}(2,41)$ as a flag-transitive
automorphism group, and has the multiplicative subgroup of $10$-th roots of 
unity in $\mathbb{F}_{41}$ as a starter block.

The first purpose of this article is to show that this $3$-$(42,10,18)$ design is 
the first one in the family of flag-transitive $3$-$(q+1,10,18)$ designs,
where $q$ is an odd power of a prime in the sequence A325072 in OEIS \cite{OEIS}:
\begin{equation}\label{OEIS_A325072}
41,61,241,281,421,601,641,\dots.
\end{equation}

It is well known that $\mathrm{PSL}(2,q)$ acts $3$-transitively on $q+1$ points
if $q\not\equiv1\pmod{4}$. For such a prime power $q$, an arbitrary $k$-subset
with $k\geq3$ is a starter block of a $3$-design. The problem is then to determine
the parameter $\lambda$ of the design, which can be reduced to finding the
order of the stabilizer of the starter. See
\cite{MR2273135,MR2036648}.

However, for $q\equiv1\pmod{4}$, the action of $\mathrm{PSL}(2,q)$ on $q+1$
points has two orbits on triples, so there is no guarantee that a $k$-subset
can be a starter block of a $3$-design. In fact, the only such instance,
as far as the authors are aware of, is the case $k=5$ by
Li, Deng and Zhang \cite{MR3752590}, in addition to the aforementioned
paper \cite{MR4275361}. Since Bonnecaze and Sol\'e \cite{MR4275361} did not
cite \cite{MR3752590}, they seem to be unaware of the connection between
\cite{MR4275361} and \cite{MR3752590} which we aim to reveal in the present paper.

The sequence \eqref{OEIS_A325072} consists of primes $p$ satisfying
$p\equiv1\pmod{20}$ and one of the following equivalent
conditions:
\begin{enumerate}[(i)]
  \item there exists $\theta\in\mathbb{F}_p^\times
  \setminus(\mathbb{F}_p^\times)^2$ such that 
  $\theta^2-4\theta-1=0$,
  \item $p\neq x^2+20y^2$ for any integers $x,y$,
  \item $p\neq x^2+100y^2$ for any integers $x,y$,
  \item $5$ is a not a quartic residue in $\mathbb{F}_p$.
\end{enumerate}

Li, Deng and Zhang \cite{MR3752590}, show that, if $p$ satisfies
the above condition (i), then 
the orbit of $\{1,\beta,\beta^2,\beta^3,\beta^4\}$, where $\beta$ is a primitive $5$-th root of unity,
under $\mathrm{PSL}(2,p)$ is a flag-transitive $3$-$(p+1,5,3)$ design.
Moreover, they showed that $p$ can be a prime power, not necessarily a prime,
as long as condition (i) is satisfied.
We will show that,
for the same prime power $p$ satisfying the condition (i),
if $q$ is an odd power of $p$, then 
the orbit of $\{1,\gamma,\gamma^2,\dots,\gamma^9\}$,
where $\gamma$ is a primitive $10$-th root of unity, under
$\mathrm{PSL}(2,q)$ is a flag-transitive $3$-$(q+1,10,18)$ design
(Proposition \ref{prop2.4} and Theorem \ref{thm5-10}).

In order to describe the results in this paper, we introduce some terminology.
Let $q$ be an odd prime power
with $q\equiv1\pmod4$, 
and consider $G=\PSL(2,q)$ acting on $\PG(1,q)=\F_q\cup\{\infty\}$.
For a divisor $k$ of $q-1$ with $3<k<q-1$, let $B$ be the unique subgroup of
order $k$ in the multiplicative group of $\F_q$.
We say that \emph{$(q,k)$ gives a $3$-design}
if $GB$ is a $3$-design.
We will show that, if $(q,k)$ gives a $3$-design,
then $k\equiv 1,2,5,10,13,17\pmod{24}$ (Theorem \ref{lem2.1}).
The result mentioned in the previous paragraph is also formulated
as Theorem~\ref{thm5-10}: 
$(q,5)$ gives a $3$-design if and only if $(q,10)$ gives a $3$-design for $q\equiv 1\pmod{20}$.
In Theorem \ref{thm13-26},
we show that $(q,13)$ gives a $3$-design if and only if $(q,26)$ gives a $3$-design for $q\equiv 1\pmod{52}$.
By computer search, we could not find any more such pair of $k$. 

\section{Preliminaries}

Let $X$ be a finite set, and let $G$ be a finite permutation group on $X$.
The group $G$ is said to be $t$-homogeneous if $G$ is transitive on $\binom{X}{t}$,
where $\binom{X}{t}$ is the set of all $t$-subsets of $X$.
Obviously a $t$-transitive group is $t$-homogeneous.
Fix $B\in\binom{X}{k}$ for $k\geq t$, and set $GB=\{gB\mid g\in G\}$.
It is clear that, if $G$ is $t$-homogeneous on $X$, then
$GB$ is a block-transitive combinatorial $t$-design on $X$.

Let $q$ be a prime power.
Consider the projective special linear group $\PSL(2,q)$ over a finite field $\F_q$ of order $q$
acting on $\PG(1,q)=\F_q\cup\{\infty\}$ by
\begin{equation}\label{eq:LF}
  \begin{bmatrix}a&b\\c&d\end{bmatrix}z=\frac{az+b}{cz+d},\quad
  \text{where }z\in\PG(1,q),
  \;\begin{bmatrix}a&b\\c&d\end{bmatrix}
  \in\mathrm{PSL}(2,q).
\end{equation}
We will focus on the case where 
$q$ is odd.
It is known that the action is $2$-transitive,
$3$-homogeneous if $q\equiv 3\pmod{4}$
and, if $q\equiv 1\pmod{4}$, 
then  there are exactly two $\mathrm{PSL}(2,q)$-orbits $\mathcal{O}_+$ and $\mathcal{O}_-$
on $\binom{\PG(1,q)}{3}$
with representatives $T_+=\{\infty,0,1\}$ and $T_-=\{\infty,0,\alpha\}$, 
respectively, where $\alpha$ is a primitive element of $\mathbb{F}_q$.
In the latter case, 
by Tonchev \cite{MR940701},
for a $k$-subset $B$ of $X$, the $\PSL(2,q)$-orbit containing  $B$ forms a $3$-design
if and only if
\[\left|\mathcal{O}_{+}\cap\binom{B}{3}\right|=\left|\mathcal{O}_{-}\cap\binom{B}{3}\right|.\]

The following is a special case of \cite[Theorem~1.6.1]{MR940701}.
See also Li, Deng and Zhang \cite[Lemma 2.8]{MR3752590}.

\begin{lem}\label{lem:3}
  Let $G$ be a permutation group on $X$, and suppose that there are
  exactly two $G$-orbits $\mathcal{O}_{\pm}$  on $\binom{X}{3}$.
  For a $k$-subset $B$ of $X$, the $G$-orbit of $B$ forms a $3$-design
  if and only if
  \[\left|\mathcal{O}_{+}\cap\binom{B}{3}\right|=\left|\mathcal{O}_{-}\cap\binom{B}{3}\right|.\]
\end{lem}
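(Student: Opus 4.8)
The plan is the standard double-counting argument for a block-transitive incidence structure, organized around the observation that the number of blocks through a triple depends only on the $G$-orbit of that triple. Write $\mathcal{B}=GB\subseteq\binom{X}{k}$ for the block set, which is a single $G$-orbit. For $T\in\binom{X}{3}$ let $\rho(T)=|\{C\in\mathcal{B}:T\subseteq C\}|$. Since $\mathcal{B}$ is $G$-invariant and containment is preserved by $G$, one has $\rho(gT)=\rho(T)$ for all $g\in G$; hence $\rho$ is constant on each of $\mathcal{O}_\pm$, say $\rho\equiv r_+$ on $\mathcal{O}_+$ and $\rho\equiv r_-$ on $\mathcal{O}_-$. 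Because $\mathcal{O}_+\cup\mathcal{O}_-$ exhausts $\binom{X}{3}$, the orbit $GB$ is a $3$-design exactly when $\rho$ is constant, i.e.\ when $r_+=r_-$ (the common value is automatically positive, as $\binom{B}{3}\neq\emptyset$ for $k\geq 3$). So the task reduces to comparing $r_+$ with $r_-$.

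Next I would compute $r_\pm$ by double-counting, for each $\varepsilon\in\{+,-\}$, the set of flags $F_\varepsilon=\{(T,C):C\in\mathcal{B},\ T\in\binom{C}{3}\cap\mathcal{O}_\varepsilon\}$. Counting over $T$ first gives $|F_\varepsilon|=|\mathcal{O}_\varepsilon|\,r_\varepsilon$. Counting over $C$: any $C\in\mathcal{B}$ is $gB$ for some $g\in G$, and $T\mapsto gT$ restricts to a bijection $\binom{B}{3}\to\binom{C}{3}$ carrying $\mathcal{O}_\varepsilon$ onto $\mathcal{O}_\varepsilon$ (orbits being $G$-invariant), so $|\binom{C}{3}\cap\mathcal{O}_\varepsilon|=|\binom{B}{3}\cap\mathcal{O}_\varepsilon|=:n_\varepsilon$, independent of $C$; thus $|F_\varepsilon|=|\mathcal{B}|\,n_\varepsilon$. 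Equating the two counts yields $r_\varepsilon=|\mathcal{B}|\,n_\varepsilon/|\mathcal{O}_\varepsilon|$, whence $r_+=r_-$ if and only if $n_+|\mathcal{O}_-|=n_-|\mathcal{O}_+|$.

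The only point that requires care is that this last condition collapses to the asserted $n_+=n_-$, i.e.\ to $|\mathcal{O}_+\cap\binom{B}{3}|=|\mathcal{O}_-\cap\binom{B}{3}|$, precisely when $|\mathcal{O}_+|=|\mathcal{O}_-|$; I would record this equality for the case at hand, where it holds because $\PSL(2,q)$ has index $2$ in the $3$-transitive group $\PGL(2,q)$ and, for $q\equiv 1\pmod 4$, the $\PGL(2,q)$-stabilizer of a triple (the anharmonic group of order $6$) lies inside $\PSL(2,q)$, forcing the two $\PSL(2,q)$-orbits on $\binom{\PG(1,q)}{3}$ to have common size $\tfrac12\binom{q+1}{3}$. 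Everything else is bookkeeping, so I expect no real obstacle beyond keeping the two orbit-restricted flag counts straight and flagging the orbit-size equality; alternatively one may simply invoke \cite[Theorem~1.6.1]{MR940701} directly, the argument above being its specialization.
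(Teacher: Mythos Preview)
Your argument is correct. The paper itself offers no proof of this lemma at all: it simply records the statement as a special case of \cite[Theorem~1.6.1]{MR940701} (with a pointer also to \cite[Lemma~2.8]{MR3752590}), which is precisely the alternative you mention in your last sentence. What you have written is the standard double-counting proof that underlies the cited result, so in substance there is no divergence---you have just unpacked the citation.

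You also put your finger on something the paper's formulation leaves implicit. As your computation shows, $r_+=r_-$ is equivalent to $n_+|\mathcal{O}_-|=n_-|\mathcal{O}_+|$, and this collapses to the stated criterion $n_+=n_-$ only under the additional hypothesis $|\mathcal{O}_+|=|\mathcal{O}_-|$. The paper does not include that hypothesis in Lemma~\ref{lem:3}, but it does hold in the only situation where the lemma is used: Lemma~\ref{lem:1} asserts that the setwise $\PSL(2,q)$-stabilizer of each triple acts as $S_3$, so both orbits have size $|\PSL(2,q)|/6$. Your justification via the index-$2$ embedding $\PSL(2,q)\leq\PGL(2,q)$ and the anharmonic stabilizer is equivalent to this. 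So your write-up is, if anything, more careful than the paper's on this point.
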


Recall that $\PSL(2,q)$ acts on the projective line
$\PG(1,q)=\F_q\cup\{\infty\}$
via linear fractional transformations \eqref{eq:LF}.

\begin{lem}[{\cite{MR1997556}}]\label{lem:1}
  Let $q$ be a prime power with $q\equiv1\pmod{4}$. 
  Then there are exactly two $\PSL(2,q)$-orbits $\mathcal{O}_+$ and $\mathcal{O}_-$
  on $\PG(1,q)$,
  with representatives $T_+=\{\infty,0,1\}$ and $T_-=\{\infty,0,\alpha\}$, 
  respectively, where $\alpha$ is a primitive element of $\F_q$.
  Moreover, the setwise stabilizer of $T_\pm$ in $\PSL(2,q)$ acts as
  the symmetric group of degree $3$ on $T_\pm$.
\end{lem}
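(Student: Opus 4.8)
The plan is to work directly with the known description of $\PSL(2,q)$ as a permutation group and to pin down the two orbits by an invariant. The natural candidate is the \emph{cross-ratio}, together with the fact that under the linear fractional action the cross-ratio of an ordered $4$-tuple is invariant, while permuting the four points sends the cross-ratio $\lambda$ into the anharmonic group $\{\lambda,1-\lambda,1/\lambda,1/(1-\lambda),\lambda/(\lambda-1),(\lambda-1)/\lambda\}$. Since we are dealing with $3$-subsets rather than $4$-tuples, I would instead use the closely related invariant attached to an \emph{ordered} triple of distinct points: $\PGL(2,q)$ is sharply $3$-transitive on $\PG(1,q)$, so it moves any ordered triple to any other, and the subgroup $\PSL(2,q)$ has index $2$ in $\PGL(2,q)$. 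Hence the $\PSL(2,q)$-orbits on ordered triples are detected by a single $\pm$-sign, concretely by whether the unique element of $\PGL(2,q)$ carrying $(\infty,0,1)$ to the given triple lies in $\PSL(2,q)$ or not; this in turn is read off from whether a certain determinant (built from the coordinates of the three points) is a square in $\F_q^\times$. Because $q\equiv 1\pmod 4$ we have $-1\in(\F_q^\times)^2$, which is exactly what makes this sign \emph{invariant under all permutations of the three points}, so it descends to an invariant of unordered triples. This gives two orbits on $\binom{\PG(1,q)}{3}$, and evaluating the invariant on $\{\infty,0,1\}$ and $\{\infty,0,\alpha\}$ shows these lie in different orbits (the relevant determinant is $1$ in the first case and $\alpha$, a nonsquare, in the second). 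That $T_+$ and $T_-$ exhaust a set of orbit representatives follows since every triple can be moved by $\PGL(2,q)$ to $\{\infty,0,c\}$ for some $c\neq 0,1$, and then to $\{\infty,0,1\}$ or $\{\infty,0,\alpha\}$ according to whether $c$ is a square or not.

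For the second assertion, I would compute the setwise stabilizers directly. For $T_+=\{\infty,0,1\}$ the six permutations of $\{\infty,0,1\}$ are realized by the linear fractional maps $z,\ 1-z,\ 1/z,\ 1/(1-z),\ z/(z-1),\ (z-1)/z$; one checks that each of the corresponding matrices, suitably normalized, has square determinant when $q\equiv1\pmod4$ (the determinants are $1$ or $-1$, and $-1$ is a square), hence all six lie in $\PSL(2,q)$. Therefore the setwise stabilizer of $T_+$ surjects onto $\mathrm{Sym}(T_+)\cong S_3$; since only the identity of $\PSL(2,q)$ fixes three points, the stabilizer \emph{is} $S_3$ acting faithfully. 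The same computation with $1$ replaced by $\alpha$ handles $T_-$: the analogous six maps permute $\{\infty,0,\alpha\}$, and their normalized determinants are $\pm\alpha$ or $\pm 1$ up to squares, again all squares since $\alpha/\alpha=1$ and $-1$ is a square — more carefully, the product of any two ``determinants'' arising here differs by a square, so either all six maps lie in $\PSL(2,q)$ or none beyond those fixing the set pointwise do, and checking one transposition (say $z\mapsto \alpha/z$, with matrix of determinant $\alpha$, hence... ) pins it down; I will need to verify that this transposition is in $\PSL(2,q)$, which is where the hypothesis $q\equiv1\pmod 4$ enters decisively, because it forces $\alpha$ and $-\alpha$ to lie in the same square class as their reciprocals modulo squares in a way that makes the sign consistent.

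Alternatively — and this may be cleaner to write — I would avoid the ordered/unordered bookkeeping by citing the sharp $3$-transitivity of $\PGL(2,q)$ once and for all: the stabilizer in $\PGL(2,q)$ of the \emph{set} $T_+$ is precisely the group of the six maps above, isomorphic to $S_3$, and it is generated by $z\mapsto 1/z$ (an involution) and $z\mapsto 1-z$ (an involution) with product of order $3$. The matrices $\left[\begin{smallmatrix}0&1\\1&0\end{smallmatrix}\right]$ and $\left[\begin{smallmatrix}-1&1\\0&1\end{smallmatrix}\right]$ have determinants $-1$ and $-1$; since $q\equiv1\pmod4$ implies $-1$ is a square, both normalize to elements of $\PSL(2,q)$, so the whole $S_3$ sits inside $\PSL(2,q)$ and stabilizes $T_+$ setwise. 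The orbit of $T_+$ then has length $|\PSL(2,q)|/6$, and the same count for $T_-$ (once we know its stabilizer is also $S_3$, by the analogous matrices with $\alpha$) shows that $|\mathcal{O}_+|+|\mathcal{O}_-|=2\cdot|\PSL(2,q)|/6 = |\PSL(2,q)|/3 = \binom{q+1}{3}$, confirming there are no further orbits.

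The main obstacle I anticipate is the second, third, and fourth transpositions for $T_-$: while it is immediate that the $S_3$ of maps permuting $\{\infty,0,\alpha\}$ lies in $\PGL(2,q)$, one must check its intersection with $\PSL(2,q)$ is all of $S_3$ and not just a subgroup of index $2$ (which would give the stabilizer acting as $\Z/3$ or $\Z/2$ instead of $S_3$). This reduces to a square-class computation for the determinants of three explicit $2\times2$ matrices over $\F_q$; the key point that makes them all squares is $q\equiv1\pmod4$, and I would present it as: the six determinants that arise are, up to $(\F_q^\times)^2$, all equal to each other precisely because $-1$ is a square, so it suffices to exhibit one transposition in $\PSL(2,q)$ — which $z\mapsto \alpha/z$ provides after normalization — and the rest follow. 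Packaging this uniformly for both $T_+$ and $T_-$ (rather than doing two separate case analyses) is the part that requires the most care in the write-up.
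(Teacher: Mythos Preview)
The paper does not supply its own proof of this lemma; it is stated with a citation to Keranen--Kreher--Shiue and no argument is given, so there is nothing in the paper to compare against directly. Your overall plan is the standard one and is sound: exploit the sharp $3$-transitivity of $\PGL(2,q)$ together with $[\PGL(2,q):\PSL(2,q)]=2$ to get at most two orbits, separate them by a square-class invariant (which is precisely the $\Delta$ that the paper introduces and proves well-defined in the very next lemma), and then compute the setwise stabilizers explicitly, finishing with the orbit count $2\cdot|\PSL(2,q)|/6=\binom{q+1}{3}$.

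There is, however, one concrete slip in the $T_-$ computation that you should fix before writing it up. The map $z\mapsto\alpha/z$ does \emph{not} permute $T_-=\{\infty,0,\alpha\}$: it sends $\alpha$ to $1\notin T_-$. Moreover its matrix $\left[\begin{smallmatrix}0&\alpha\\1&0\end{smallmatrix}\right]$ has determinant $-\alpha$, a nonsquare, so it is not even in $\PSL(2,q)$. The transposition you actually want, swapping $0\leftrightarrow\infty$ and fixing $\alpha$, is $z\mapsto\alpha^2/z$, with matrix $\left[\begin{smallmatrix}0&\alpha^2\\1&0\end{smallmatrix}\right]$ of determinant $-\alpha^2$, which \emph{is} a square since $q\equiv1\pmod4$. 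Once you correct this, the six linear fractional maps permuting $T_-$ have determinants among $\{1,-1,\alpha^2,-\alpha^2\}$, all squares, so the setwise stabilizer of $T_-$ in $\PSL(2,q)$ is the full $S_3$, exactly as for $T_+$. Your instinct that ``this is where the hypothesis $q\equiv1\pmod4$ enters decisively'' was right; you only wrote down the wrong map.
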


\begin{lem}[{\cite[Lemma 4.3]{MR1075708}}]\label{lem:2}
  Under the same assumptions and notation as in Lemma~\ref{lem:1}, 
  let 
  $\chi\colon\F_q^\times\to\{\pm1\}$ denote the quadratic residue character,
  namely, $\chi(a)=1$ if $a\in (\F_q^\times)^2=\{b^2\mid b\in \F_q^\times\}$,
  and $\chi(a)=-1$ otherwise.
  Define
  $$
  \Delta:\binom{\F_q}{3}\to\{\pm1\},\quad
  \Delta(\{z_1,z_2,z_3\})=
  \chi((z_1-z_2)(z_2-z_3)(z_3-z_1)).$$
  Then
  \[\binom{\F_q}{3}\cap\mathcal{O}_{\pm}=\Delta^{-1}(\pm1).\]
\end{lem}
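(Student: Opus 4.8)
The plan is to attach to each finite triple one explicit element of $\PGL(2,q)$ carrying it to $T_+=\{\infty,0,1\}$, and then to read off which $\PSL(2,q)$-orbit the triple lies in from the determinant of that element.

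First I would check that $\Delta$ is well defined, i.e.\ that $\chi\bigl((z_1-z_2)(z_2-z_3)(z_3-z_1)\bigr)$ is independent of the labelling. The product $(z_1-z_2)(z_2-z_3)(z_3-z_1)$ differs only in sign from the Vandermonde $\prod_{i<j}(z_i-z_j)$, so a transposition of two of the $z_i$ multiplies it by $-1$; and since $q\equiv1\pmod4$ we have $\chi(-1)=(-1)^{(q-1)/2}=1$. Hence the value is unchanged under all permutations, and $\Delta$ is well defined. The hypothesis $q\equiv1\pmod4$ is exactly what makes $\Delta$ well defined, consistent with the fact that in the $3$-homogeneous case $q\equiv3\pmod4$ no such invariant can exist.

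Next, for a fixed $\{z_1,z_2,z_3\}\in\binom{\F_q}{3}$ I would take the linear fractional transformation
\[
h(z)=\frac{(z_3-z_2)(z-z_1)}{(z_3-z_1)(z-z_2)},
\]
represented by the matrix $M=\begin{bmatrix}z_3-z_2 & -(z_3-z_2)z_1\\ z_3-z_1 & -(z_3-z_1)z_2\end{bmatrix}$, whose determinant is $(z_1-z_2)(z_3-z_2)(z_3-z_1)\neq0$; thus $h\in\PGL(2,q)$. A direct substitution gives $h(z_1)=0$, $h(z_2)=\infty$, $h(z_3)=1$, so $h(\{z_1,z_2,z_3\})=T_+$. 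Since $\det M=-(z_1-z_2)(z_2-z_3)(z_3-z_1)$, $\chi(-1)=1$, and a matrix represents an element of $\PSL(2,q)$ exactly when its determinant is a nonzero square, we get
\[
h\in\PSL(2,q)\iff\chi(\det M)=1\iff\Delta(\{z_1,z_2,z_3\})=1.
\]

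Finally I would pin down the orbit. By definition $\{z_1,z_2,z_3\}\in\mathcal{O}_+$ iff some $g\in\PSL(2,q)$ maps it to $T_+$. If such a $g$ exists, then $gh^{-1}$ stabilises $T_+$ setwise, i.e.\ $gh^{-1}\in\mathrm{Stab}_{\PGL(2,q)}(T_+)$; using the well-known fact that $\PGL(2,q)$ acts sharply $3$-transitively on $\PG(1,q)$, this stabiliser has order $6$, while by Lemma~\ref{lem:1} its subgroup $\mathrm{Stab}_{\PSL(2,q)}(T_+)$ maps onto the full symmetric group on $T_+$, with trivial kernel (again by sharp $3$-transitivity), hence also has order $6$. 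So $\mathrm{Stab}_{\PGL(2,q)}(T_+)=\mathrm{Stab}_{\PSL(2,q)}(T_+)\subseteq\PSL(2,q)$, and therefore $h=(gh^{-1})^{-1}g\in\PSL(2,q)$. Conversely, if $h\in\PSL(2,q)$ then $h$ already witnesses $\{z_1,z_2,z_3\}\in\mathcal{O}_+$. Combined with the previous display this gives $\{z_1,z_2,z_3\}\in\mathcal{O}_+\iff\Delta(\{z_1,z_2,z_3\})=1$; and since $\binom{\F_q}{3}$ is partitioned both as $(\mathcal{O}_+\cap\binom{\F_q}{3})\sqcup(\mathcal{O}_-\cap\binom{\F_q}{3})$ and as $\Delta^{-1}(1)\sqcup\Delta^{-1}(-1)$, we conclude $\binom{\F_q}{3}\cap\mathcal{O}_\pm=\Delta^{-1}(\pm1)$. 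The main obstacle is precisely this last step: the existence of an \emph{odd} element of $\PGL(2,q)$ carrying the triple to $T_+$ does not by itself exclude membership in $\mathcal{O}_+$, and the way out rests on the coincidence — valid only because $q\equiv1\pmod4$ — that $\mathrm{Stab}_{\PGL(2,q)}(T_+)$ is already contained in $\PSL(2,q)$, which is exactly the ``acts as $S_3$'' assertion in Lemma~\ref{lem:1}. The rest is the determinant bookkeeping displayed above.
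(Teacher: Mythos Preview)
Your argument is correct, but it follows a genuinely different route from the paper's.

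The paper proves the lemma by showing directly that $\Delta$ is $\PSL(2,q)$-invariant: for $f=\begin{bmatrix}a&b\\c&d\end{bmatrix}$ one computes
\[
\prod_{\text{cyc}}(f(z_i)-f(z_j))=\bigl((cz_1+d)(cz_2+d)(cz_3+d)\bigr)^{-2}(ad-bc)^{3}\prod_{\text{cyc}}(z_i-z_j),
\]
and since $ad-bc\in(\F_q^\times)^2$ for $f\in\PSL(2,q)$, the value of $\Delta$ is preserved. Hence $\Delta$ is constant on each of $\binom{\F_q}{3}\cap\mathcal{O}_\pm$; surjectivity of $\Delta$ (via multiplication by $\alpha$) then forces the two level sets to coincide with the two orbit-traces.

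You instead fix one explicit $h\in\PGL(2,q)$ carrying the triple to $T_+$, compute $\chi(\det h)=\Delta$, and then use Lemma~\ref{lem:1} to identify $\mathrm{Stab}_{\PGL(2,q)}(T_+)$ with $\mathrm{Stab}_{\PSL(2,q)}(T_+)$, so that membership of the triple in $\mathcal{O}_+$ is equivalent to $h\in\PSL(2,q)$. This is a coset argument rather than an invariance argument. It has the small advantage that it pins down the sign convention $\mathcal{O}_+\leftrightarrow\Delta^{-1}(1)$ explicitly, something the paper's proof establishes only up to a swap (which is harmless for every later use). The paper's approach, on the other hand, avoids invoking Lemma~\ref{lem:1} and the sharp $3$-transitivity of $\PGL(2,q)$, trading them for a single transformation formula.
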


\begin{proof}
  Observe that $\chi(-1)=1$ since $q\equiv1\pmod{4}$.
  This implies that $\Delta$ is well-defined. 
  If $w=f(z)$ is a linear fractional transformation as in 
  \eqref{eq:LF}, then
  \begin{align*}
    &(f(z_1)-f(z_2))(f(z_2)-f(z_3))(f(z_3)-f(z_1))
    \\&=\left((cz_1+d)(cz_2+d)(cz_3+d)\right)^2
    (ad-bc)^3(z_1-z_2)(z_2-z_3)(z_3-z_1).
  \end{align*}
  Thus $\Delta(\{z_1,z_2,z_3\})=\Delta(\{f(z_1),f(z_2),f(z_3)\})$,
  in other words, $\Delta$ is constant on each of
  $\binom{\F_q}{3}\cap\mathcal{O}_\pm$. The proof will be complete
  if we show that $\Delta$ is surjective. This is immediate
  since $\Delta(\{\alpha z_1,\alpha z_2,\alpha z_3\})=-\Delta(\{z_1,z_2,z_3\})$ for
  $\{z_1,z_2,z_3\}\in\binom{\F_q}{3}$.
\end{proof}

We will consider the following setting.

\begin{dfn}\label{dfn:starter}
  Let $q$ be a prime power, and let $B$ be a subset of $\PG(1,q)$ consisting of $k$ elements.
  We say that $B$ is a \emph{starter} of a $3$-design under the action of $\PSL(2,q)$
  if the $\PSL(2,q)$-orbit containing $B$ is the set of blocks of a
  $3$-$(q+1,k,\lambda)$ design for some positive integer $\lambda$.
  Suppose
  $q\equiv 1\pmod{4}$, 
  $k$ is a divisor of $q-1$, and $e=(q-1)/k$.
  Let $\alpha$ be a primitive root of the finite field $\F_q$,
  and set $\beta=\alpha^e$, $B=\langle \beta\rangle$.
  We say that $(q,k)$ \emph{gives a $3$-design} if
  $B$ is a starter of a $3$-design under the action of $\PSL(2,q)$.
\end{dfn}

We first determine the value $\lambda$ in term of $q$ and $k$.

\begin{thm}\label{thm:1}
  Under the same notation as in Definition~\ref{dfn:starter}, suppose that $k< p$, where
  $p$ is the characteristic of the field $\F_q$.
  Then the following statements hold:
  \begin{enumerate}[{\rm (i)}]
    \item If $e$ is odd, then $(q,k)$ gives a $3$-$(q+1,k,\lambda)$ design,
    where $\lambda=\frac{1}{2}(k-1)(k-2)$.
    \item If $e$ is even and $(q,k)$ gives a $3$-$(q+1,k,\lambda)$ design,
    then $\lambda=\frac{1}{4}(k-1)(k-2)$.
  \end{enumerate}
\end{thm}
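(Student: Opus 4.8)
The plan is to reduce both statements to computing the order of the setwise stabilizer $G_B$ of $B$ in $G=\PSL(2,q)$. First I would record the value of $\lambda$: since the action of $G$ on $\PG(1,q)$ is $2$-transitive it is $2$-homogeneous, so $GB$ is automatically a $2$-design, and if it is moreover a $3$-$(q+1,k,\lambda)$ design then a double count of incident pairs (block, $3$-subset of that block) gives
\[
\lambda\binom{q+1}{3}=|GB|\binom{k}{3}=\frac{|G|}{|G_B|}\binom{k}{3},
\]
which, using $|G|=\tfrac12 q(q-1)(q+1)$, collapses to $\lambda=\frac{k(k-1)(k-2)}{2|G_B|}$. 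So part~(i) will follow once I know $|G_B|=k$ and that $GB$ really is a $3$-design, and part~(ii) once I know $|G_B|=2k$.

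For a lower bound on $|G_B|$: since $q\equiv1\pmod4$, $-1$ is a square, so the inversion $z\mapsto 1/z$ lies in $\PSL(2,q)$, and it stabilizes $B$ because $B$ is a subgroup of $\F_q^\times$. A rotation $z\mapsto\gamma z$ with $\gamma\in B$ stabilizes $B$ and lies in $\PSL(2,q)$ exactly when $\gamma$ is a square; such $\gamma$ form a cyclic group of order $k$ when $e$ is even (all of $B$ consists of squares) and of order $k/2$ when $e$ is odd (the squares in $B$ are $\langle\beta^2\rangle$, and $k$ is even because $4\mid q-1=ek$). Together with the inversion these generate a dihedral subgroup of $G_B$ of order $2k$, respectively $k$; so $|G_B|\ge 2k$ if $e$ is even and $|G_B|\ge k$ if $e$ is odd.

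The heart of the argument is the matching upper bound, which I would get by first determining the stabilizer of $B$ in the larger group $\PGL(2,q)$. Writing $B=\{\omega\in\F_q:\omega^k=1\}$, an element $g(z)=\frac{az+b}{cz+d}$ of $\PGL(2,q)$ fixes $B$ setwise iff $\prod_{\omega^k=1}\bigl(X-g(\omega)\bigr)=X^k-1$. If $c=0$, the coefficient of $X^{k-1}$ forces (using $p\nmid k$) $b=0$, so $g$ is a rotation $z\mapsto(a/d)z$ with $a/d\in B$. If $c\ne0$, normalizing $c=1$ and clearing denominators (via $\prod_{\omega^k=1}(Y\omega+Z)=Z^k+(-1)^{k+1}Y^k$) turns the condition into
\[
(dX-b)^k+(-1)^{k+1}(X-a)^k=\bigl(d^k+(-1)^{k+1}\bigr)(X^k-1);
\]
comparing coefficients of $X^{k-1}$ gives $bd^{k-1}=(-1)^ka$ (again using $p\nmid k$) and of $X^{k-2}$ gives $d^{k-2}b^2=(-1)^ka^2$, the latter requiring $\binom{k}{2}\ne0$ in $\F_q$ — which is exactly where $k<p$ is used. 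Substituting the first relation into the second yields $b^2d^{k-2}\bigl(1-(-1)^kd^k\bigr)=0$, hence $b=0$ (then also $a=0$, i.e.\ $g\equiv0$, absurd), or $(-d)^k=1$ (absurd, since $-d=g^{-1}(\infty)\notin B$), or $d=0$ (then $a=0$ and $g(z)=b/z$ with $b\in B$, an inversion). So the $\PGL(2,q)$-stabilizer of $B$ is the dihedral group $\{z\mapsto\beta^j z\}\cup\{z\mapsto\beta^j/z\}$ of order $2k$; an element $z\mapsto\beta^j z$ or $z\mapsto\beta^j/z$ lies in $\PSL(2,q)$ iff its matrix has square determinant, i.e.\ iff $\beta^j=\alpha^{ej}$ is a square (using once more that $-1$ is a square), i.e.\ iff $ej$ is even, and counting these recovers exactly the dihedral subgroup of the previous paragraph. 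Hence $|G_B|=2k$ if $e$ is even and $|G_B|=k$ if $e$ is odd.

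It remains, for part~(i), to check that $GB$ is a $3$-design. When $e$ is odd, $\beta=\alpha^e$ is a nonsquare, so $\chi(\beta)=-1$; applying the displayed identity in the proof of Lemma~\ref{lem:2} to the map $z\mapsto\beta z$ gives $\Delta(\{\beta z_1,\beta z_2,\beta z_3\})=\chi(\beta^3)\,\Delta(\{z_1,z_2,z_3\})=-\Delta(\{z_1,z_2,z_3\})$. Since $z\mapsto\beta z$ permutes $B$, it induces a bijection of $\binom{B}{3}$ which by Lemma~\ref{lem:2} interchanges $\binom{B}{3}\cap\mathcal{O}_+$ with $\binom{B}{3}\cap\mathcal{O}_-$, so these two sets have equal size and $GB$ is a $3$-design by Lemma~\ref{lem:3}. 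Putting everything together gives $\lambda=\frac{k(k-1)(k-2)}{2k}=\frac12(k-1)(k-2)$ in case~(i), and, assuming $GB$ is a $3$-design, $\lambda=\frac{k(k-1)(k-2)}{4k}=\frac14(k-1)(k-2)$ in case~(ii). The main obstacle I anticipate is the polynomial-identity step: one must be sure the coefficient comparisons really force $g$ to be a rotation or an inversion with no spurious solutions, and it is precisely the hypothesis $k<p$ — which keeps $\binom{k}{2}$, hence the whole argument, nondegenerate modulo $p$ — that makes this go through.
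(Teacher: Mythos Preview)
Your argument is correct. The reduction to computing $|G_B|$, the lower bound via the obvious dihedral subgroup, and the verification that $GB$ is a $3$-design when $e$ is odd (multiplication by $\beta=\alpha^e$ swaps the two orbit-pieces of $\binom{B}{3}$) all match the paper exactly.

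Where you diverge is in the upper bound on $|G_B|$. The paper first uses $k<p$ to rule out $p$-elements in $G_B$ by cycle type, then invokes Dickson's classification of subgroups of $\PSL(2,q)$: any proper overgroup of the exhibited dihedral group would contain a larger dihedral group whose rotation has cycle type $(1,1,\ell/2,\dots,\ell/2)$ incompatible with fixing a $k$-set. You instead compute the full $\PGL(2,q)$-stabilizer of $B$ directly, by rewriting the condition $g(B)=B$ as the polynomial identity $\prod_{\omega^k=1}(X-g(\omega))=X^k-1$ and reading off the coefficients of $X^{k-1}$ and $X^{k-2}$; the hypothesis $k<p$ enters precisely to keep $k$ and $\binom{k}{2}$ invertible. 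Intersecting with $\PSL(2,q)$ then recovers the dihedral group of the right order.

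Your route is more elementary and self-contained: it avoids the subgroup classification entirely and even yields the slightly stronger statement that the $\PGL(2,q)$-stabilizer is dihedral of order $2k$. The paper's route, by contrast, is more structural and would adapt more readily to starter blocks that are not multiplicative subgroups, since the cycle-type reasoning does not use the algebraic description of $B$. Both uses of $k<p$ are genuinely different manifestations of the same hypothesis.
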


\begin{proof}
  (i). Suppose that $e$ is odd.
  By Lemma \ref{lem:2}, the multiplication by
  $\alpha^e$ interchanges $\binom{B}{3}\cap\mathcal{O}_+$ and
  $\binom{B}{3}\cap\mathcal{O}_-$. 
  By Lemma \ref{lem:3}, $(q,k)$ gives a $3$-design.
  (The assumption $k<p$ is not necessarily for this part.)
  
  To compute $\lambda$, we determine the stabilizer $S$ of $B$ in $\PSL(2,q)$.
  An element of $\PSL(2,q)$ will be denoted by $\begin{bmatrix}a&b\\c&d\end{bmatrix}$,
  where $a,b,c,d\in \F_q$ such that  $ad-bc\in (\F_q^\times)^2$.
  We remark that no non-trivial element of $\PSL(2,q)$ fixes three points in $\PG(1,q)$
  \cite[II.8.1 Hilfssatz]{MR224703}.
  Let $z\in S$ be of order $p$.
  Then the type of $z$ as a permutation on $\PG(1,q)$ must be $(1,p,\dots,p)$.
  Since we are assuming $k<p$, any $k$-subset can not be fixed by $z$.
  The prime $p$ does not divide $|S|$.
  Set
  $$x=
  \begin{bmatrix}
    \beta^2&0\\0&1
  \end{bmatrix},\quad
  y=
  \begin{bmatrix}
    0&1\\1&0
  \end{bmatrix}.$$
  Then $\langle x,y\rangle$ is a dihedral group of order $k$ contained in $S$.
  Suppose $\langle x,y\rangle$ is a proper subgroup of $S$.
  By the classification of subgroups of $\PSL(2,q)$
  \cite{MR104735}, \cite[II 8.27 Hauptzats]{MR224703},
  $S$ contains a dihedral group $D$ of order $\ell$ larger than  $\langle x,y\rangle$,
  where $k\mid \ell$ and $\ell\mid (q-1)/2$.
  Remark that $e=(q-1)/k$ is odd, and thus $\ell/k$ is also odd.  
  Let $u$ be an element in $D$ of order $\ell/2$.
  The element $u$ has exactly two fixed points in $\PG(1,q)$
  and acts semiregularly on the other points \cite[II Section 8]{MR224703}.
  The type of $u$ is $(1,1,\ell/2,\dots,\ell/2)$.
  Only when $\ell/2=k$, $u$ can stabilize a $k$-subset, but $\ell/k$ is odd now. 
  We can conclude $S=\langle x,y\rangle$ and the order is $k$.
  Now, $v=q+1$ and
  \begin{eqnarray*}
    \lambda&=&|GB|\frac{\binom{k}{3}}{\binom{v}{3}}
               = \frac{|G|}{|S|}\frac{k(k-1)(k-2)}{v(v-1)(v-2)}\\
           &=& \frac{\frac{1}{2}(q+1)q(q-1)}{k} \frac{k(k-1)(k-2)}{(q+1)q(q-1)}
               = \frac{1}{2}(k-1)(k-2).
  \end{eqnarray*}
  
  (ii). Suppose that $e$ is even. We only compute $\lambda$.
  The argument is similar to that for (i). 
  Set
  $$x=
  \begin{bmatrix}
    \beta&0\\0&1
  \end{bmatrix},\quad
  y=
  \begin{bmatrix}
    0&1\\1&0
  \end{bmatrix}.$$
  Then $\langle x,y\rangle$ is a dihedral group of order $2k$ contained in the stabilizer $S$.
  By the similar argument as in (i), we can conclude that $S=\langle x,y\rangle$
  and $\lambda=\frac{1}{4}(k-1)(k-2)$.
\end{proof}

The case $(q-1)/e=4$ in Theorem~\ref{thm:1} has been noticed
by Keranen, Kreher and Shiue, \cite{MR1997556} which states
that if $q\equiv5$ or $13\pmod{24}$, then there is a block-transitive
$3$-$(q+1,4,3)$ design.
As for codes of $3$-$(q+1,k,\lambda)$ designs from
$\PGL(2,2^m)$, see Ding, Tang and Tonchev \cite{MR4272622}.

For the remainder of this article, we use the notation of Definition \ref{dfn:starter}
and $\chi$, $\Delta$ in Lemma \ref{lem:2},
and suppose that \emph{$e$ is even}, because $(q,k)$ always gives a $3$-design, if $e$ is odd. 
\begin{lem}\label{lem:delta}
  The pair $(q,k)$ gives a $3$-design
  if and only if
  \begin{equation}\label{sum0}
    \sum_{\{x,y,z\}\in\binom{B}{3}}\Delta(\{x,y,z\})=0.
  \end{equation}
\end{lem}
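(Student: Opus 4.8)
The plan is to derive this immediately from Lemma~\ref{lem:3} together with the description of the two triple-orbits in Lemma~\ref{lem:2}. First I would note that, since $B=\langle\beta\rangle$ is a subgroup of the multiplicative group $\F_q^\times$, we have $B\subseteq\F_q$ and in particular $\infty\notin B$ (and $0\notin B$). Hence every block triple lies in $\binom{\F_q}{3}$, so the map $\Delta$ of Lemma~\ref{lem:2} is defined on all of $\binom{B}{3}$, and by that lemma a triple $T\in\binom{B}{3}$ lies in $\mathcal{O}_+$ precisely when $\Delta(T)=1$ and in $\mathcal{O}_-$ precisely when $\Delta(T)=-1$.

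Next I would convert the balance condition of Lemma~\ref{lem:3} into the stated sum. Since each $T\in\binom{B}{3}$ contributes $\Delta(T)\in\{\pm1\}$ to exactly one of $\bigl|\mathcal{O}_+\cap\binom{B}{3}\bigr|$ and $\bigl|\mathcal{O}_-\cap\binom{B}{3}\bigr|$ according to its sign, we get
\[
\left|\mathcal{O}_{+}\cap\binom{B}{3}\right|-\left|\mathcal{O}_{-}\cap\binom{B}{3}\right|
=\sum_{\{x,y,z\}\in\binom{B}{3}}\Delta(\{x,y,z\}).
\]
Because $q\equiv1\pmod4$, Lemma~\ref{lem:1} guarantees that $\PSL(2,q)$ has exactly the two orbits $\mathcal{O}_\pm$ on $\binom{\PG(1,q)}{3}$, so Lemma~\ref{lem:3} applied with $G=\PSL(2,q)$ says that $GB$ is a $3$-design if and only if the left-hand side above vanishes. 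Combining this with the displayed identity yields the equivalence with \eqref{sum0}.

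I do not expect a genuine obstacle here: the argument is a short bookkeeping step on top of the preceding lemmas. The only point that needs a word of care is that $B$ consists of nonzero field elements and never contains $\infty$, which is exactly what makes Lemma~\ref{lem:2} applicable to every block triple; the hypotheses $q\equiv1\pmod4$ (so that $\Delta$ is well defined and there are just two triple-orbits) and $e$ even are already in force by our standing assumptions, the latter being the only case of interest since for $e$ odd Theorem~\ref{thm:1}(i) already shows that $(q,k)$ gives a $3$-design.
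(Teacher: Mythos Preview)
Your proof is correct and takes essentially the same approach as the paper, which simply states that the result is immediate from Lemmas~\ref{lem:3} and~\ref{lem:2}. Your version is just a more detailed unpacking of that one-line proof, including the observations that $B\subseteq\F_q^\times$ and that the $\pm1$ values of $\Delta$ convert the balance condition into the vanishing of the sum.
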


\begin{proof}
  This is immediate from Lemmas~\ref{lem:3} and \ref{lem:2}.
\end{proof}

Our problem is to consider what parameters satisfy the condition (\ref{sum0}).

The dihedral group $D_{2k}$ acts naturally on the group
$B=\{1,\beta,\beta^2,\dots,\beta^{k-1}\}$ of order $k$
and thus acts on $\binom{B}{3}$.
Since we are assuming that $q\equiv 1\pmod{4}$ and $e$ is even,
we have $\chi(-1)=\chi(\beta)=1$.
This means that the action of $D_{2k}$ on $\binom{B}{3}$ preserves the value of $\Delta$.

\begin{lem}\label{lem1.6}
  The $D_{2k}$-orbits on $\binom{B}{3}$
  are as follows.
  \begin{longtable}{|c|c|c|c|}
    \hline
    Type & representative & length & \\
    \hline
    {\rm (A)} & $\{1,\beta^i,\beta^j\}$ & $2k$ & $1\leq i < j-i < k-i-j$ \\
    {\rm (B)} & $\{1,\beta^i,\beta^{2i}\}$ & $k$ & $1\leq i< k/2$, $i\ne k/3$\\
    {\rm (C)} & $\{1,\beta^{k/3},\beta^{2k/3}\}$ & $k/3$ & only if $k\equiv0\pmod{3}$\\    
    \hline
  \end{longtable}
\end{lem}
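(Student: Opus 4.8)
The plan is to translate the statement into the combinatorics of the standard dihedral action on $\mathbb{Z}/k\mathbb{Z}$. Via the bijection $B\to\mathbb{Z}/k\mathbb{Z}$, $\beta^i\mapsto i$, the natural action of $D_{2k}$ on $B$ becomes the action of the dihedral group of order $2k$ on $\mathbb{Z}/k\mathbb{Z}$ generated by the translation $i\mapsto i+1$ and the negation $i\mapsto -i$; this action is faithful, and, as already recalled for $\PSL(2,q)$ on $\PG(1,q)$, no non-trivial element fixes three points. So the task is to list the $D_{2k}$-orbits on the $3$-subsets of $\mathbb{Z}/k\mathbb{Z}$.

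The first step is to introduce the \emph{gap multiset} invariant: writing a $3$-subset $S$ in cyclic order and recording the multiset $\{d_1,d_2,d_3\}$ of the three circular distances between consecutive points gives a partition of $k$ into three positive parts. This multiset is preserved by every rotation and only reversed (hence unchanged as a multiset) by every reflection, so it is constant on $D_{2k}$-orbits; conversely $\{0,d_1,d_1+d_2\}$ realizes the partition $k=d_1+d_2+d_3$, and any two $3$-subsets with the same gap multiset differ by a rotation after possibly applying one reflection, since a $3$-element multiset admits at most two cyclic orderings and reversal interchanges them. Thus $D_{2k}$-orbits correspond bijectively to partitions of $k$ into three positive parts, which fall into three classes: all parts distinct (type (A)); exactly two parts equal, say $\{i,i,k-2i\}$ with $i\ne k/3$ (type (B)); all three parts equal, possible only when $3\mid k$ (type (C)). I would then verify that the ranges in the table list each partition exactly once: for (B) the repeated value $i$ is determined by the partition, and $1\le i<k/2$ together with $i\ne k/3$ encode $k-2i\ge 1$ and $k-2i\ne i$, while $i<k/2$ also guarantees that $\{0,i,2i\}$ actually has gap multiset $\{i,i,k-2i\}$; for (A) one sorts the three distinct gaps to obtain the unique representative $\{1,\beta^i,\beta^j\}$; and (C) contributes the single representative $\{1,\beta^{k/3},\beta^{2k/3}\}$. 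The three classes are pairwise disjoint, so no orbit is counted twice.

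The second step computes the orbit lengths by orbit--stabiliser: an orbit has length $2k$ divided by the order of the stabiliser of a representative. Because no non-trivial element of $\PSL(2,q)$ fixes three points, the stabiliser of a $3$-subset $S$ embeds into the symmetric group on $S$ and so has order dividing $6$; moreover a non-trivial rotation fixing $S$ forces $3\mid k$ and $S$ equally spaced, while a reflection fixing $S$, being an involution of a set of odd size, fixes exactly one point of $S$, whose two adjacent gaps must then be equal. Consequently: for type (A) the stabiliser is trivial and the orbit length is $2k$; for type (B) the unique point lying between the two equal gaps is the only possible reflection axis, so the stabiliser has order $2$ and the length is $k$; for type (C) the rotation by $k/3$ and the reflection through each of the three points all stabilise $S$, so the stabiliser is the full symmetric group of order $6$ and the length is $k/3$. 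The main obstacle is the bookkeeping of the first step --- establishing that the gap multiset is a complete invariant (the converse direction, and the treatment of reflections) and confirming that the listed parameter ranges enumerate each orbit exactly once; the stabiliser computation in the second step is then routine.
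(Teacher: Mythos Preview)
The paper states Lemma~\ref{lem1.6} without proof, so there is no argument to compare against. Your approach via the gap-multiset invariant is the standard and correct one: the multiset of the three circular distances is $D_{2k}$-invariant, every partition $k=d_1+d_2+d_3$ is realised by $\{0,d_1,d_1+d_2\}$, and the invariant is complete because a $3$-element multiset admits at most two cyclic orderings, which reflection interchanges. The stabiliser computation for the orbit lengths is likewise sound.

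One minor remark: you justify that no non-trivial element of $D_{2k}$ fixes three points of $B$ by appealing to the corresponding fact for $\PSL(2,q)$ on $\PG(1,q)$. This is valid in the paper's standing hypotheses ($q\equiv1\pmod4$ and $e$ even, so that $z\mapsto\beta z$ and $z\mapsto z^{-1}$ do lie in $\PSL(2,q)$ and realise $D_{2k}$ on $B\subset\PG(1,q)$), but it is heavier than necessary. Directly on $\mathbb{Z}/k\mathbb{Z}$, a non-trivial rotation fixes no point and a reflection fixes at most two; using this elementary observation keeps the lemma self-contained and independent of the ambient linear group.
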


\begin{prop}\label{prop1.7}
  The following statements hold.
  \begin{enumerate}[{\rm (i)}]
    \item If $k$ is odd, then the sequence 
    $(\chi(1-\beta),\chi(1-\beta^2),\dots,\chi(1-\beta^{(k-1)/2}))$
    determines whether $(q,k)$ gives a $3$-design or not.
    \item If $k\equiv 2\pmod{4}$, then the sequence
    $(\chi(1-\beta^2),\chi(1-\beta^4),\dots,\chi(1-\beta^{k/2-1}),\chi(2))$
    determines whether $(q,k)$ gives a $3$-design or not.
  \end{enumerate}
\end{prop}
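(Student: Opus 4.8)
The plan is to invoke Lemma~\ref{lem:delta}: $(q,k)$ gives a $3$-design if and only if
\[
S:=\sum_{\{x,y,z\}\in\binom{B}{3}}\Delta(\{x,y,z\})=0,
\]
and then to exhibit $S$ as a polynomial with integer coefficients, depending only on $k$, in the entries of the sequence displayed in the proposition. Once this is done, the vanishing of $S$ --- hence whether $(q,k)$ gives a $3$-design --- is a function of that sequence, which is the assertion.

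First I would compute $\Delta$ on $\binom{B}{3}$. Parametrising a $3$-subset of $B$ by its cyclic gap sequence $(a,b,c)$ with $a,b,c\ge 1$ and $a+b+c=k$ --- so that the subset is $\{1,\beta^a,\beta^{a+b}\}$ --- a short computation using $\chi(-1)=\chi(\beta)=1$ (which holds because $q\equiv 1\pmod 4$ and $e$ is even) gives
\[
\Delta(\{1,\beta^a,\beta^{a+b}\})=\chi(1-\beta^a)\,\chi(1-\beta^b)\,\chi(1-\beta^c),
\]
using $\chi(-\beta^a)=1$ and, since $a+b=k-c$, the identity $1-\beta^{a+b}=1-\beta^{-c}=-\beta^{-c}(1-\beta^c)$. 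In particular $\Delta$ depends only on the multiset of gaps, matching the fact recorded before Lemma~\ref{lem1.6} that $D_{2k}$ preserves $\Delta$; so by Lemma~\ref{lem1.6}, $S$ is a $\Z$-linear combination, with coefficients equal to the orbit lengths $2k$, $k$, $k/3$, of the corresponding products $\chi(1-\beta^a)\chi(1-\beta^b)\chi(1-\beta^c)$ over the listed representatives. It therefore suffices to rewrite each factor $\chi(1-\beta^m)$, $1\le m\le k-1$, in terms of the asserted sequence.

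The one symmetry I would use throughout is $\chi(1-\beta^m)=\chi(1-\beta^{k-m})$, which follows from $1-\beta^{k-m}=-\beta^{-m}(1-\beta^m)$. For part~(i) this already finishes the job: $k$ is odd, each gap lies in $\{1,\dots,k-1\}$, and the symmetry folds it into $\{1,\dots,(k-1)/2\}$, so every $\Delta$-value, and hence $S$, is an integer polynomial in $\chi(1-\beta),\dots,\chi(1-\beta^{(k-1)/2})$. For part~(ii), where $k\equiv 2\pmod 4$, there is one extra reduction: $B$ has even order, so $\beta^{k/2}=-1$, giving $1-\beta^{k/2}=2$ and $1+\beta^m=1-\beta^{m+k/2}$ for every $m$; together with $1-\beta^{2m}=(1-\beta^m)(1+\beta^m)$ this produces, for $m\ne k/2$,
\[
\chi(1-\beta^m)=\chi(1-\beta^{2m})\,\chi(1-\beta^{m+k/2}).
\]
When $m$ is an odd gap with $m\ne k/2$, both $2m$ and $m+k/2$ (read modulo $k$) are even and in $\{1,\dots,k-1\}$ --- neither can be $\equiv 0\pmod k$, since that would force $m=k/2$ --- so $\chi(1-\beta^m)$ is a product of two even-index values; the remaining odd case $m=k/2$ contributes the factor $\chi(2)$. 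Applying the symmetry $\chi(1-\beta^m)=\chi(1-\beta^{k-m})$ to the even indices (none of which equals $k/2$, as $k/2$ is odd) folds them into $\{2,4,\dots,k/2-1\}$. Hence $S$ is an integer polynomial in $\chi(1-\beta^2),\chi(1-\beta^4),\dots,\chi(1-\beta^{k/2-1}),\chi(2)$, which is part~(ii).

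The computation of $\Delta$ and the two displayed identities are routine; the step that needs care is the parity bookkeeping in part~(ii): one must check that every odd gap other than $k/2$ is sent to a pair of genuinely \emph{even} indices that remain in $\{1,\dots,k-1\}$, and that no $1-\beta^m$ degenerates to $0$. This is exactly where the hypothesis $k\equiv 2\pmod 4$ --- rather than merely $k$ even --- enters, since it makes $k/2$ odd, so that the central index $k/2$ never collides with the even indices being folded by the symmetry.
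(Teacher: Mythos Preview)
Your argument is correct and is essentially the paper's own proof: the paper computes $\Delta(\{\beta^i,\beta^j,\beta^\ell\})=\chi(1-\beta^{j-i})\chi(1-\beta^{\ell-j})\chi(1-\beta^{i-\ell})$, invokes the symmetry $\chi(1-\beta^m)=\chi(1-\beta^{k-m})$ for (i), and for (ii) derives the same identity $\chi(1-\beta^{\ell})=\chi(1-\beta^{2\ell})\chi(1-\beta^{\ell+k/2})$ for odd $\ell\ne k/2$ (via multiplying by the square $(1+\beta^\ell)^2$, which is your factorisation $1-\beta^{2\ell}=(1-\beta^\ell)(1+\beta^\ell)$ read the other way). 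Your write-up is more explicit about the parity and range bookkeeping than the paper, but the route is identical.
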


\begin{proof}
  For any $\{\beta^i,\beta^j,\beta^\ell\}\in \binom{B}{3}$, we have
  \begin{eqnarray*}
    \Delta(\{\beta^i,\beta^j,\beta^\ell\})&=&\chi((\beta^i-\beta^j)(\beta^j-\beta^\ell)(\beta^\ell-\beta^i))\\
                                          &=&\chi(1-\beta^{j-i})\chi(1-\beta^{\ell-j})\chi(1-\beta^{i-\ell})
  \end{eqnarray*}
  and $\chi(1-\beta^m)=\chi(\beta^{k-m}-1)=\chi(1-\beta^{k-m})$.
  The statement (i) is clear.

  Suppose $k\equiv 2\pmod{4}$.
  For an odd $\ell\ne k/2$,
  $$\chi(1-\beta^{\ell})= \chi((1-\beta^{\ell})(1+\beta^{\ell})(1+\beta^{\ell}))
  =\chi(1-\beta^{2\ell})\chi(1-\beta^{{\ell}+k/2}).$$
  Remark that $2\ell$ and $\ell+k/2$ are even.
  For ${\ell}=k/2$, $\beta^{\ell}=-1$ and $\chi(1-\beta^{\ell})=\chi(2)$.
  Now (ii) holds.
\end{proof}

\begin{rem}
  We remark that the sequence in Proposition \ref{prop1.7} depends
  on the choice of the primitive root $\alpha$ (see Remark \ref{rem5.4}).
  In Theorem \ref{lem2.1}, we will show that, if $(q,k)$ gives a $3$-design,
  then $k\not\equiv 0\pmod{4}$.
\end{rem}

\section{Possible parameters}\label{sec:parameter}
In this section, we consider possibility of the parameter $(q,k)$.
By computer search, the value of $k$ is restricted.
We begin with the possibility of $k$.

\begin{thm}\label{lem2.1}
  If $(q,k)$ gives a $3$-design,
  then $k\equiv 1,2,5,10,13,17\pmod{24}$.
\end{thm}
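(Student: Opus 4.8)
The plan is to analyze the sum in \eqref{sum0} modulo small powers of $2$, using the $D_{2k}$-orbit structure from Lemma~\ref{lem1.6} together with the simplifications in Proposition~\ref{prop1.7}. Recall that since $e$ is even we have $\chi(\beta)=\chi(-1)=1$, so each term $\Delta(\{x,y,z\})\in\{\pm1\}$, and by Lemma~\ref{lem:delta} the pair $(q,k)$ gives a $3$-design if and only if this sum vanishes. A sum of $\pm1$'s over $\binom{k}{3}$ terms can only vanish if $\binom{k}{3}$ is even, which already gives a congruence restriction on $k$, but that alone is too weak; the point is to extract finer information by grouping the terms into $D_{2k}$-orbits of sizes $2k$, $k$, and (when $3\mid k$) $k/3$, on each of which $\Delta$ is constant. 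So modulo $2$, the sum is congruent to $c_{\mathrm{B}}\cdot k + c_{\mathrm{C}}\cdot(k/3)$ where $c_{\mathrm{B}}$ counts the number of type-(B) orbits on which $\Delta=-1$ and $c_{\mathrm{C}}$ is the corresponding count for type-(C); more usefully, working modulo $4$ (or modulo $2$ after dividing by appropriate factors of $2$) we get constraints relating $k$, the number of orbits of each type, and the parities involved.

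The key steps, in order: First I would split into the cases $k$ odd, $k\equiv2\pmod4$, and $k\equiv0\pmod4$, since the goal is precisely to pin down $k\bmod{24}$ (equivalently $k\bmod 8$ and $k\bmod 3$). Second, in each case I would count the number of type-(A), type-(B), type-(C) orbits exactly as functions of $k$ (these are elementary counts: the number of type-(B) orbits is roughly $k/2$ minus corrections for $i=k/3$, and type-(C) contributes one orbit iff $3\mid k$), and reduce the vanishing of \eqref{sum0} modulo $4$. Since each type-(A) orbit has length $2k$, those contribute $0\pmod4$ when $k$ is even and need separate care when $k$ is odd; the type-(B) orbits (length $k$) and type-(C) orbit (length $k/3$) carry the mod-$4$ content. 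Third, I would observe that the key input one cannot control is the actual value of $\Delta$ on each orbit — these depend on the arithmetic of $\F_q$ — so the argument must produce a contradiction that holds regardless of those signs. This forces a \emph{parity} argument: the total number of terms in orbits of odd length must itself be even for cancellation to be possible, and one tracks how many orbits have odd length. For instance, when $k\equiv0\pmod4$, both $k$ and (if applicable) $k/3$ may be even, so all orbit lengths are even and the sum is automatically $\equiv0\pmod{?}$ — here instead I expect the obstruction to come from looking modulo a higher power, or from the extra structure in Proposition~\ref{prop1.7}(no clean statement for $k\equiv0\pmod4$, which is a hint that something degenerates), ultimately ruling out $4\mid k$ because the type-(B)/type-(A) count forces $\binom{k}{3}$ or a related quantity into the wrong residue class.

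The main obstacle I anticipate is handling the type-(A) orbits and the case $k\equiv0\pmod4$ cleanly: type-(A) orbits dominate in number but their $\Delta$-values are genuinely unconstrained, so one needs an argument showing that their contribution modulo the relevant power of $2$ is fixed independent of the signs — presumably because $2k\equiv0$ modulo that power — thereby isolating the (much smaller) type-(B) and type-(C) contributions as the only obstruction. Getting the bookkeeping right for which of the special values $i=k/2$, $i=k/3$, $2i=k$ etc.\ occur in each residue class of $k$, and correctly counting orbits in the boundary cases, will be the delicate part. Once the counts are in hand, the conclusion $k\equiv1,2,5,10,13,17\pmod{24}$ should drop out as exactly the residues for which the forced congruence on $\sum\Delta$ is compatible with $0$; the excluded residues $k\equiv0,3,4,6,\dots\pmod{24}$ are those where the orbit-length bookkeeping forces the sum into a nonzero residue class mod $4$.
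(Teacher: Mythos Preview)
Your overall strategy is the paper's: use the $D_{2k}$-orbit decomposition of Lemma~\ref{lem1.6}, the fact that $\Delta$ is constant on each orbit, and reduce the vanishing of \eqref{sum0} modulo $k$, $2k$, $4k$. But you have the difficulty located in the wrong place, and one concrete ingredient is missing.

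The case $4\mid k$ that worries you is actually easy (after $3\mid k$, where the single type-(C) orbit of length $k/3$ forces the sum into $\pm k/3\pmod k$). With $3\nmid k$ and $4\mid k$ there are exactly $k/2-1$ type-(B) orbits, and this number is \emph{odd}; since each type-(B) orbit contributes $\pm k$ and each type-(A) orbit contributes $\pm 2k$, the whole sum lies in $k+2k\Z$ and is nonzero. The identical parity argument handles $k\equiv3\pmod4$, where the type-(B) count is $(k-1)/2$, again odd. No appeal to Proposition~\ref{prop1.7} or to higher $2$-powers is needed here.

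The case you have not isolated, and where a new idea is required, is $k\equiv6\pmod8$ (with $3\nmid k$), hidden inside your ``$k\equiv2\pmod4$'' bucket. Here the type-(B) count is $k/2-1\equiv2\pmod4$, so the $\bmod\,2k$ argument is inconclusive. The missing observation is a pairing among type-(B) orbits: for even $k$ one computes directly that
\[
\Delta(\{1,\beta^i,\beta^{2i}\})=\chi(1-\beta^{2i})=\Delta(\{1,\beta^{k/2-i},\beta^{k-2i}\}),
\]
so the type-(B) orbits match in pairs $i\leftrightarrow k/2-i$ with equal $\Delta$-value. For $k=8\ell+6$ this gives $2\ell+1$ pairs, an odd number, whence the type-(B) contribution is an odd multiple of $2k$, i.e., $\equiv2k\pmod{4k}$. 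One then checks by a direct count that the number of type-(A) orbits, namely $\tfrac{1}{2k}\bigl(\binom{k}{3}-(4\ell+2)k\bigr)=\tfrac{2(2\ell+1)(4\ell+1)}{3}$, is even, so the type-(A) contribution is $\equiv0\pmod{4k}$. Altogether the sum is $\equiv2k\pmod{4k}$, hence nonzero. This pairing of type-(B) orbits, together with the parity of the type-(A) count, is the step your outline does not supply.
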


\begin{proof}
  \noindent
  \textbf{Claim 1}.  We will show that $k\not\equiv 0\pmod{3}$.
  Suppose $k\equiv 0\pmod{3}$. The $D_{2k}$-orbit of Type (C) in Lemma \ref{lem1.6} exists.
  Then  $\sum_{\{x,y,z\}\in\binom{B}{3}}\Delta(\{x,y,z\})\equiv k/3 \pmod{k}$ cannot be zero.

  In the following, we suppose $k\not\equiv 0\pmod{3}$.
  The $D_{2k}$-orbit of Type (C) does not exist.

  \noindent
  \textbf{Claim 2}.  We will show that $k\not\equiv 0,3\pmod{4}$.
  Suppose $k\equiv 0, 3\pmod{4}$.
  Then the number of $D_{2k}$-orbits of Type (B) is odd.
  Thus $\sum_{\{x,y,z\}\in\binom{B}{3}}\Delta(\{x,y,z\})\equiv k \pmod{2k}$ cannot be zero.

  \noindent
  \textbf{Claim 3}.  We will show that $k\not\equiv 6\pmod{8}$.
  Suppose $k\equiv 6\pmod{8}$ and set $k=8\ell+6$.
  There are $(4\ell+2)$ orbits of Type (B).
  Since $\Delta(\{1,\beta^i,\beta^{2i}\})=\Delta(\{1,\beta^{k/2-i},\beta^{2(k/2-i)}\})$,
  $\sum_{\{x,y,z\}:\text{ Type (B)}}\Delta(\{x,y,z\})\equiv 2k \pmod{4k}$.
  The number of orbits of Type (A) is
  $$\frac{1}{2k}\left(\binom{k}{3}-(4\ell+2)k\right)=\frac{2(2\ell+1)(4\ell+1)}{3}$$
  and this is even.
  Hence $\sum_{\{x,y,z\}:\text{ Type (A)}}\Delta(\{x,y,z\})\equiv 0 \pmod{4k}$
  and
  $$\sum_{\{x,y,z\}\in\binom{B}{3}}\Delta(\{x,y,z\})\equiv 2k \pmod{4k}$$
  cannot be zero.

  Combining claims above, the proof is completed.
\end{proof}

\begin{exam}
  For small $k$, we  list primes $p$ such that $(p,k)$ give $3$-designs. 
  \begin{longtable}{|r|r|l|}
    \hline \endfoot 
    \hline
    $k$ & $k \mod {24}$  & $p$\\
    \hline \endhead
    $5$ & $5$ & $41,61,241,\dots$\\
    $10$ & $10$ & $41,61,241,\dots$\\
    $13$ & $13$ & $3121,3797,4993,\dots$\\
    $17$ & $17$ & $2381,4421,6529,\dots$\\
    $25$ & $1$ & $601,4001,6701,\dots$\\
    $26$ & $2$ & $3121,3797,4993,\dots$\\
    $29$ & $5$ & $6961,9049,18097,\dots$\\
    $34$ & $10$ & $613,1973,2789,\dots$\\
    $37$ & $13$ & $3257,32561,38333,\dots$\\
    $41$ & $17$ & $5413,13613,21649,\dots$\\
    $49$ & $1$ & $14897,29989,44101,\dots$\\
    $50$ & $2$ & $601,1601,4001,\dots$\\
    $53$ & $5$ & $61057, 127837, 140557,\dots$\\
    $58$ & $10$ & $1973,9049,9281,\dots$\\
    \hline
  \end{longtable}
  We can see that prime numbers for $k=5,10$ and $k=13, 26$ are same, respectively.
  It will be proved that these are completely the same
  in Section \ref{sec5-10} and Section \ref{sec13-26}.
  By computer search, we could find no more such pair of $k$. 
\end{exam}

We will give easy results on field extensions.

\begin{prop}\label{prop2.3}
  If $(q,k)$ gives a $3$-design, then so does $(q^n,k)$ for odd $n$.
\end{prop}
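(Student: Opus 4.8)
The plan is to deduce the statement for $(q^n,k)$ from the criterion in Lemma~\ref{lem:delta}, applied once over $\F_q$ and once over $\F_{q^n}$. Write $e=(q-1)/k$ and $e'=(q^n-1)/k$; since we are in the regime where $e$ is even (the odd case being automatic by Theorem~\ref{thm:1}(i)), and since $q-1\mid q^n-1$, the quotient $e'=e\cdot\frac{q^n-1}{q-1}$ is again even, so the set-up of Definition~\ref{dfn:starter} applies verbatim over $\F_{q^n}$. Fix a primitive root $\alpha$ of $\F_q$; a natural choice of primitive root of $\F_{q^n}$ is some $\alpha'$ with $\alpha'^{(q^n-1)/(q-1)}=\alpha$, and then $\beta=\alpha^{e}$ and $\beta'=\alpha'^{e'}$ satisfy $\beta'=\beta$, so the starter block $B=\langle\beta\rangle$ is literally the same subset of $\F_q\subseteq\F_{q^n}$ in both cases.

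The key step is to compare the two quadratic residue characters. Let $\chi$ be the quadratic character of $\F_q^\times$ and $\chi'$ that of $\F_{q^n}^\times$. For $b\in\F_q^\times$ one has, by the norm compatibility of Legendre symbols, $\chi'(b)=\chi\bigl(N_{\F_{q^n}/\F_q}(b)\bigr)=\chi(b^{(q^n-1)/(q-1)})=\chi(b)^{(q^n-1)/(q-1)}$; when $n$ is odd, $(q^n-1)/(q-1)=1+q+\dots+q^{n-1}$ is a sum of $n$ odd terms, hence odd, so $\chi'(b)=\chi(b)$ for all $b\in\F_q^\times$. Consequently $\Delta=\Delta'$ on $\binom{B}{3}$ — indeed on $\binom{\F_q}{3}$ — since $\Delta$ is built from $\chi$ evaluated at differences of elements of $B\subseteq\F_q$. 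Therefore the sum $\sum_{\{x,y,z\}\in\binom{B}{3}}\Delta(\{x,y,z\})$ is the same integer whether computed over $\F_q$ or over $\F_{q^n}$, and Lemma~\ref{lem:delta} gives: $(q,k)$ gives a $3$-design $\iff$ this sum is $0$ $\iff$ $(q^n,k)$ gives a $3$-design. (In particular we get the stronger biconditional, though only the stated implication is claimed.)

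The only real obstacle is the bookkeeping around characters and primitive roots: one must make sure the characterization of $\mathcal{O}_\pm$ via $\Delta$ (Lemma~\ref{lem:2}) is being applied with the correct field's character in each case, and that the identification $\beta'=\beta$ is legitimate — but this is exactly arranged by choosing $\alpha'$ above $\alpha$, which is always possible because the norm map $\F_{q^n}^\times\to\F_q^\times$ is surjective. If one prefers not to fix compatible primitive roots, one can instead note that $B$ is intrinsically the unique subgroup of order $k$ of $\F_q^\times$ and observe that Lemma~\ref{lem:delta} depends only on $B$, not on the chosen $\alpha$; then the parity computation $(q^n-1)/(q-1)$ odd for $n$ odd is the whole content. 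I expect the proof to be only a few lines once the character comparison is stated cleanly.
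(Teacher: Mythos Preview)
Your proof is correct and follows essentially the same route as the paper's: both rest on the fact that for odd $n$ the quadratic residue character of $\F_{q^n}$ restricts to that of $\F_q$, so the same starter block $B\subset\F_q$ yields identical $\Delta$-values in both fields. The paper concludes via Proposition~\ref{prop1.7} rather than Lemma~\ref{lem:delta}, but this is cosmetic; your version is simply more detailed (explicitly checking $e'$ is even and justifying the character restriction) and yields the biconditional as a bonus.
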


\begin{proof}
  A primitive $k$-th root $\beta$ in $\F_q$ is also a primitive $k$-th root in $\F_{q^n}$.
  Since $n$ is odd, the quadratic residue character of $\F_{q^n}$ restricts to 
  the quadratic residue character of $\F_{q}$.
  Now the assertion follows from Proposition~\ref{prop1.7}.
\end{proof}

\begin{rem}
  The pairs $(29^3, 13)$ and $(29^3,26)$ give $3$-designs,
  though $(29, 13)$ and $(29,26)$ do not.
\end{rem}

\begin{prop}\label{prop2.4}
  Suppose that $n$ is even and $k\mid q-1$.
  Then $(q^n,k)$ does not give a $3$-design.
\end{prop}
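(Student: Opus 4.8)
The plan is to show that when $n$ is even, every value $\chi(1-\beta^m)$ appearing in the criterion of Proposition \ref{prop1.7} is equal to $+1$, so that the sum in \eqref{sum0} reduces to a positive count of terms and hence cannot vanish. First I would reduce to the case $n=2$: if $(q^n,k)$ gives a $3$-design with $n$ even, write $n=2m$ and regard $\F_{q^n}$ as a quadratic extension of $\F_{q^m}$, noting that $k\mid q-1$ divides $q^m-1$ as well, so $\beta$ remains a primitive $k$-th root of unity in $\F_{q^m}$. Thus it suffices to prove the claim for a quadratic extension, i.e. to show $(q^2,k)$ does not give a $3$-design whenever $k\mid q-1$.

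The key observation is that in $\F_{q^2}$, the subgroup $B=\langle\beta\rangle$ of order $k$ is contained in $\F_q^\times$ (since $k\mid q-1$), and every element of $\F_q^\times$ is a square in $\F_{q^2}^\times$: indeed $|\F_q^\times|=q-1$ divides $(q^2-1)/2=(q-1)(q+1)/2$ because $(q+1)/2$ is an integer for odd $q$, so $\F_q^\times\subseteq(\F_{q^2}^\times)^2$. Consequently, for the quadratic residue character $\chi$ of $\F_{q^2}$ we have $\chi(a)=1$ for every $a\in\F_q^\times$. In particular, $1-\beta^m\in\F_q^\times$ for each $m$ with $1\le m\le k-1$ (it is nonzero since $\beta^m\ne1$), so $\chi(1-\beta^m)=1$; likewise $\chi(2)=1$ as $2\in\F_q^\times$.

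Plugging this into the formula from the proof of Proposition \ref{prop1.7}, we get $\Delta(\{\beta^i,\beta^j,\beta^\ell\})=\chi(1-\beta^{j-i})\chi(1-\beta^{\ell-j})\chi(1-\beta^{i-\ell})=1$ for every triple, so
\[
\sum_{\{x,y,z\}\in\binom{B}{3}}\Delta(\{x,y,z\})=\binom{k}{3}>0,
\]
since $k=|B|\ge 4>3$. By Lemma \ref{lem:delta}, $(q^2,k)$ does not give a $3$-design, and by the reduction above the same holds for $(q^n,k)$ for any even $n$.

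I expect no serious obstacle here; the only point requiring a little care is the divisibility argument showing $\F_q^\times\subseteq(\F_{q^2}^\times)^2$, which hinges on $q$ being odd so that $(q+1)/2\in\Z$. One should also double-check that $B\subseteq\F_q$ genuinely forces $1-\beta^m\in\F_q$, which is immediate, and that the hypothesis $q\equiv1\pmod4$ from Definition \ref{dfn:starter} is not actually needed for this direction (it is not — the argument only uses that $q$ is odd and $e$ even, the latter being automatic since $e=(q^n-1)/k$ and $q^n-1$ is even while... in fact $e$ even is guaranteed here because $k\mid q-1$ implies $k\mid q^2-1$ properly, but even if $e$ were odd the conclusion of Theorem \ref{thm:1}(i) would contradict nothing we need — so to be safe I would simply invoke Lemma \ref{lem:delta} under the standing assumption that $e$ is even, which holds since $k$ divides $q-1$ and hence $(q^n-1)/k$ is even for even $n$).
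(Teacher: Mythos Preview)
Your argument is correct and follows essentially the same route as the paper: both proofs observe that for even $n$ the quadratic residue character of $\F_{q^n}$ is trivial on $\F_q^\times$, whence every $\Delta$-value equals $1$ and the sum in Lemma~\ref{lem:delta} is $\binom{k}{3}\neq0$. The paper states this restriction fact directly for arbitrary even $n$, whereas you first reduce to $n=2$ and then give the divisibility argument $q-1\mid(q^2-1)/2$; that reduction is harmless but unnecessary, and your final rambling paragraph about $e$ and $q\equiv1\pmod4$ could be dropped since both conditions hold automatically for $q^n$ with $n$ even.
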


\begin{proof}
  As in Proposition \ref{prop2.3},
  a primitive $k$-th root $\beta$ in $\F_q$ is also a primitive $k$-th root in $\F_{q^n}$.
  Since $n$ is even, the quadratic residue character of $\F_{q^n}$ restricts to the
  identity character of $\F_q$. This implies that $\Delta(\{x,y,z\})=1$ for
  every $\{x,y,z\}\in\binom{B}{3}$.
  Now the assertion follows from Lemma~\ref{lem:delta}.
\end{proof}

\begin{rem}
  We know no example such that $(q^n,k)$ gives a $3$-design and $n$ is even.
\end{rem}

\section{Flag-transitive $3$-$(q+1,5,3)$
  and $3$-$(q+1,10,18)$ designs from $\PSL(2,q)$}\label{sec5-10}

In this section, we will prove the following theorem.
Some parts of the results were already obtained in Li, Deng and Zhang \cite{MR3752590},
but we will give proofs.
The value $\lambda$ is determined by Theorem \ref{thm:1}.

\begin{thm}\label{thm5-10}
  Let $q$ be a prime power with $q\equiv 1\pmod{20}$.
  Fix a primitive root $\alpha$ of $\F_q$, and set $\beta=\alpha^{(q-1)/5}$.
  The following statements are equivalent.
  \begin{enumerate}[{\rm (i)}]
    \item $(q,5)$ gives a $3$-design.
    \item $(q,10)$ gives a $3$-design.
    \item $\chi(1+\beta)=-1$.
    \item There exists $\theta\in \F_q^\times$
    such that $\chi(\theta)=-1$ and $\theta^2-4\theta-1=0$.
    \item $5\not\in \langle \alpha^4\rangle$.
  \end{enumerate}
  Moreover, if $q=p$ is a prime number,
  then each of the following conditions is equivalent to any of the above.
  \begin{enumerate}[{\rm (i)}]
    \setcounter{enumi}{5}
    \item There exists no $(x,y)\in \mathbb{Z}^2$
    such that $p=x^2+20y^2$.
    \item There exists no $(x,y)\in \mathbb{Z}^2$
    such that $p=x^2+100y^2$.
  \end{enumerate}
\end{thm}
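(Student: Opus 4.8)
\textbf{Proof plan for Theorem~\ref{thm5-10}.}
The plan is to establish the cycle of implications
(iii)$\Rightarrow$(i), (iii)$\Rightarrow$(ii), (i)$\Rightarrow$(iii), (ii)$\Rightarrow$(iii),
together with (iii)$\Leftrightarrow$(iv)$\Leftrightarrow$(v), and finally, under the
extra hypothesis $q=p$, the equivalences (v)$\Leftrightarrow$(vi)$\Leftrightarrow$(vii).
The key device throughout is Lemma~\ref{lem:delta}: since $q\equiv1\pmod{20}$ forces
$e=(q-1)/k$ to be even for both $k=5$ and $k=10$, the pair $(q,k)$ gives a $3$-design
precisely when $\sum_{\{x,y,z\}\in\binom{B}{3}}\Delta(\{x,y,z\})=0$, and Proposition~\ref{prop1.7}
reduces this sum to a short character sequence. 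Concretely, for $k=5$ the relevant data
is $(\chi(1-\beta),\chi(1-\beta^2))$, and the two $D_{10}$-orbits on $\binom{B}{3}$ are both
of Type~(B) (there is no Type~(A) orbit, since $\binom{5}{3}=10=2\cdot5$), with representatives
$\{1,\beta,\beta^2\}$ and $\{1,\beta^2,\beta^4\}$. A direct computation of $\Delta$ on these
two orbits, using $\Delta(\{\beta^i,\beta^j,\beta^\ell\})=\chi(1-\beta^{j-i})\chi(1-\beta^{\ell-j})\chi(1-\beta^{i-\ell})$
and $\chi(1-\beta^m)=\chi(1-\beta^{5-m})$, shows that the sum vanishes iff the two orbit values
have opposite signs, which one unwinds to the condition $\chi(1-\beta)\ne\chi(1-\beta^2)$;
a further manipulation (writing $1-\beta^2=(1-\beta)(1+\beta)$) rewrites this as $\chi(1+\beta)=-1$.
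This gives (i)$\Leftrightarrow$(iii). The same bookkeeping for $k=10$ — now with the larger
collection of $D_{20}$-orbits, one of Type~(A) and the rest of Type~(B), using the reduced
sequence $(\chi(1-\beta^2),\chi(1-\beta^4),\chi(2))$ from Proposition~\ref{prop1.7}(ii), where
here $\beta$ denotes the primitive $10$th root $\alpha^{(q-1)/10}$ — should collapse, after
cancellation of paired terms, to the same condition $\chi(1+\beta_5)=-1$ on the primitive
$5$th root; this yields (ii)$\Leftrightarrow$(iii). I expect this second computation, keeping
careful track of which orbits pair up under $i\mapsto k/2-i$ and which survive, to be the most
error-prone bookkeeping step, though not conceptually hard.

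For (iii)$\Leftrightarrow$(iv): the point is that a primitive $5$th root of unity $\beta$
satisfies $\beta^4+\beta^3+\beta^2+\beta+1=0$, so setting $\theta=-(\beta+\beta^{-1})$ (a
root of the ``golden ratio'' quadratic) one checks $\theta^2-\theta-1=0$, and more to the
point $\theta':=\beta+\beta^4+1$ or a suitable linear combination satisfies $\theta'^2-4\theta'-1=0$;
the precise algebraic identity to use is that $\F_q$ contains an element $\theta$ with
$\theta^2-4\theta-1=0$ iff $5$ is a square in $\F_q$ (always true here since $q\equiv1\pmod5$),
and the quadratic-character condition $\chi(\theta)=-1$ translates, via
$\theta=2\pm\sqrt5$ and an explicit relation between $\theta$ and $1+\beta$ (both lie in the
quadratic subfield $\F_q(\sqrt5)=\F_q$), into $\chi(1+\beta)=-1$. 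The cleanest route is
probably to express $(1+\beta)(1+\beta^4)=1+\beta+\beta^4+\beta^5=2+(\beta+\beta^4)$ and note
$\beta+\beta^4=(-1+\sqrt5)/2$ (for one choice of $\sqrt5$), so $(1+\beta)(1+\beta^4)=(3+\sqrt5)/2$,
whose norm and square-class can be compared with $\theta$; since $\beta^4=\bar\beta$ is a
Galois conjugate and $\chi$ is Galois-fixed on the prime field (or just $\chi(1+\beta)=\chi(1+\beta^4)$
by $\beta^4=\beta^{-1}$, $1+\beta^{-1}=\beta^{-1}(1+\beta)$, $\chi(\beta)=1$), we get
$\chi(1+\beta)^2=\chi((3+\sqrt5)/2)$, and then relate $(3+\sqrt5)/2$ to $\theta=2+\sqrt5$ by
$\theta=(3+\sqrt5)/2\cdot$(a square). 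For (iii)$\Leftrightarrow$(v): $5\in\langle\alpha^4\rangle$
means $5$ is a fourth power in $\F_q$; since $q\equiv1\pmod4$ and (given $q\equiv1\pmod5$)
$5$ is already a square, $5$ being a fourth power is equivalent to $\sqrt5\in(\F_q^\times)^2$,
and one shows $\chi(\sqrt5)$ is governed by $\chi(1+\beta)$ through the same identity linking
$1+\beta$, $\theta$ and $\sqrt5$. I would present (iv) and (v) as two faces of the statement
``$\chi(\theta)=-1$'', with (iv) the ``additive'' normal form of $\theta$ and (v) the
``multiplicative'' ($5=\theta^{-1}\cdot$something, or $5$ and $\theta$ in the same coset).

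For the prime case (v)$\Leftrightarrow$(vi)$\Leftrightarrow$(vii): here I would invoke classical
results on representations of primes by quadratic forms in the genus theory / class field theory
of the order $\Z[\sqrt{-20}]$ (equivalently $\Z[\sqrt{-5}]$ extended). For a prime $p\equiv1\pmod{20}$
the form class group of discriminant $-80$ has order $4$, with the principal genus
$\{x^2+20y^2,\,4x^2+20y^2\text{-type}\}$ and the other genus $\{4x^2+5y^2\text{-type}\}$ etc.;
the criterion that $p=x^2+20y^2$ is representable is a biquadratic residue condition, and the
standard statement (see Cox, \emph{Primes of the form $x^2+ny^2$}) is that for $p\equiv1\pmod{20}$,
$p=x^2+20y^2$ iff $5$ is a fourth power mod $p$, which is exactly the negation of (v). The same
reference handles $x^2+100y^2$ (discriminant $-400$): for $p\equiv1\pmod{20}$, $p=x^2+100y^2$
iff $5$ is a fourth power mod $p$ as well (the two forms single out the same genus/ray class),
giving (vi)$\Leftrightarrow$(vii) and both $\Leftrightarrow\neg$(v). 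I expect the main obstacle
here to be pinning down the exact biquadratic criterion and citing it correctly — the orbit/character
computations are mechanical, but matching our normalization of ``fourth power'' with the
number-theoretic literature's convention requires care. If a self-contained argument is preferred
over citing Cox, one can instead note $p=x^2+100y^2\iff p$ splits completely in
$\Q(i,\sqrt5,\sqrt[4]{5})$ (or the appropriate ring class field), reduce to splitting in
$\Q(\sqrt[4]5)$ over $\Q(\sqrt5,i)$, and thereby recover the fourth-power condition; I would keep
this as a remark and use the citation in the main proof to stay concise.
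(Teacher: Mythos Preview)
Your overall plan matches the paper's: reduce (i) and (ii) to the character sum of Lemma~\ref{lem:delta}, decompose $\binom{B}{3}$ into $D_{2k}$-orbits via Lemma~\ref{lem1.6}, and compute $\Delta$ on representatives. Your treatment of (i)$\Leftrightarrow$(iii) is essentially the paper's argument. For (ii)$\Leftrightarrow$(iii) your orbit count is off (for $k=10$ there are four Type~(A) orbits of length $20$ and four Type~(B) orbits of length $10$, not ``one of Type~(A) and the rest Type~(B)''), but this is bookkeeping; the paper carries out exactly this computation and obtains $\chi(1-\beta)(40+20\chi(2))(1+\chi(1+\beta))$, which vanishes iff $\chi(1+\beta)=-1$.

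The genuine gap is in your (iii)$\Leftrightarrow$(iv). You propose to compute $\chi\big((1+\beta)(1+\beta^4)\big)$ and relate it to $\chi(\theta)$. But you already observed $\chi(1+\beta)=\chi(1+\beta^4)$ (since $1+\beta^{-1}=\beta^{-1}(1+\beta)$ and $\chi(\beta)=1$), so $(1+\beta)(1+\beta^4)$ is \emph{automatically} a square and $\chi$ of it is identically $1$: the information you need has been erased. Your proposed fix, ``$\theta=(3+\sqrt5)/2\cdot(\text{a square})$'', also fails: the ratio $(2+\sqrt5)\big/\big((3+\sqrt5)/2\big)=(1+\sqrt5)/2$ is not a square in general. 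What is needed is a direct identity linking $1+\beta$ to a \emph{single} root $\theta$, not to the norm of both. The paper does this by setting $\theta_0=2(\beta+\beta^4)+3$ (which is one of $2\pm\sqrt5$), checking $\theta_0^2-4\theta_0-1=0$, and then verifying the algebraic identity
\[
\theta_0(1+\beta)=-\theta_0\beta^2(\beta^2+\beta+1)=-\beta(1+\beta)^4\in(\F_q^\times)^2,
\]
which gives $\chi(\theta_0)=\chi(1+\beta)$ directly. Similarly, for (iii)$\Leftrightarrow$(v) your sketch defers to ``the same identity'', which is now missing; the paper instead exhibits the explicit square root
\[
\big(\beta(1-\beta)^2(1+\beta)\big)^2=5,
\]
so that $\chi(\sqrt5)=\chi(1+\beta)$ and hence $5$ is a fourth power iff $\chi(1+\beta)=1$. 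For (vi) and (vii) both you and the paper cite the literature (the paper uses Brink \cite{MR2473893} for (vi)$\Leftrightarrow$(vii) and Hasse \cite{MR195848} for the link to (v)); your Cox reference would serve equally well.
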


To prove Theorem \ref{thm5-10}, we need some lemmas.

\begin{lem}
  The conditions (i) and (iii) in Theorem \ref{thm5-10} are equivalent. 
\end{lem}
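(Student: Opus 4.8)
The plan is to show (i) $\iff$ (iii) by directly comparing the sum $\sum_{\{x,y,z\}\in\binom{B}{3}}\Delta(\{x,y,z\})$ for $k=5$ against the single arithmetic condition $\chi(1+\beta)=-1$, using the orbit structure of Lemma~\ref{lem1.6} and the reduction in Proposition~\ref{prop1.7}(i). Since $5$ is odd and $5\not\equiv0\pmod3$, Lemma~\ref{lem1.6} tells us that $\binom{B}{3}$ with $|B|=5$ decomposes into $D_{10}$-orbits only of Types (A) and (B); a quick count gives $\binom{5}{3}=10$, with one orbit of Type (B) (length $5$, represented by $\{1,\beta,\beta^2\}$) and one orbit of Type (A) is impossible since that would need length $10$ but then $5+10\ne10$ — so in fact $\binom{B}{3}$ is a single $D_{10}$-orbit of length $5$ from Type (B) together with nothing else? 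Let me recount: Type (B) requires $1\le i<k/2=5/2$, so $i=1,2$, giving \emph{two} orbits of Type (B) each of length $5$, totalling $10=\binom{5}{3}$, and no Type (A) orbit. Thus $\binom{B}{3}$ splits into exactly two $D_{10}$-orbits, represented by $\{1,\beta,\beta^2\}$ and $\{1,\beta^2,\beta^4\}$.

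Next I would evaluate $\Delta$ on these two representatives. By the computation in the proof of Proposition~\ref{prop1.7}, $\Delta(\{1,\beta^i,\beta^j\})=\chi(1-\beta^{j-i})\chi(1-\beta^{j})\chi(1-\beta^{i})$ (using $\chi(1-\beta^m)=\chi(1-\beta^{k-m})$ freely, and here $k=5$ so $\beta^5=1$). For $\{1,\beta,\beta^2\}$ this is $\chi(1-\beta)^2\chi(1-\beta^2)=\chi(1-\beta^2)$. For $\{1,\beta^2,\beta^4\}$, using $\beta^4=\beta^{-1}$, $\beta^3=\beta^{-2}$, we get $\chi(1-\beta^2)\chi(1-\beta^4)\chi(1-\beta^2)=\chi(1-\beta^4)=\chi(1-\beta^{-1})=\chi(1-\beta)$ (the last equality since $\chi(1-\beta^{-1})=\chi(\beta^{-1})\chi(\beta-1)=\chi(-1)\chi(1-\beta)=\chi(1-\beta)$). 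So by Lemma~\ref{lem:delta}, $(q,5)$ gives a $3$-design iff $5\chi(1-\beta^2)+5\chi(1-\beta)=0$, i.e. iff $\chi(1-\beta)=-\chi(1-\beta^2)=-\chi((1-\beta)(1+\beta))=-\chi(1-\beta)\chi(1+\beta)$, which rearranges to $\chi(1+\beta)=-1$. This is precisely condition (iii), completing the equivalence.

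The main obstacle is purely bookkeeping: getting the orbit count under $D_{10}$ exactly right (it is easy to double-count or to mistakenly expect a Type (A) orbit) and keeping the exponent arithmetic mod $5$ consistent when passing between $\beta^m$ and $\beta^{5-m}$. A secondary subtlety is justifying that all the $\chi$-values are well-defined and that $\chi(-1)=\chi(\beta)=1$ here — but this is exactly the standing assumption that $q\equiv1\pmod4$ and $e=(q-1)/5$ is even (which holds since $q\equiv1\pmod{20}$ forces $5\mid q-1$ and $4\mid q-1$, hence $20\mid q-1$ and $e$ even), so it is already available from the discussion preceding Lemma~\ref{lem:delta}. Once the two $\Delta$-values are pinned down, the equivalence with (iii) is a one-line manipulation of the character identity $\chi(1-\beta^2)=\chi(1-\beta)\chi(1+\beta)$.
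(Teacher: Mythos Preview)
Your argument is correct and follows exactly the same route as the paper's proof: identify the two $D_{10}$-orbits on $\binom{B}{3}$ with representatives $\{1,\beta,\beta^2\}$ and $\{1,\beta^2,\beta^4\}$, compute $\Delta$ on each as $\chi(1-\beta^2)$ and $\chi(1-\beta)$ respectively, and reduce the vanishing of the sum to $\chi(1+\beta)=-1$. The only issue is cosmetic---you should excise the initial miscount and self-correction in the first paragraph and simply state the orbit decomposition cleanly.
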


\begin{proof}
  We use the notations in Definition~\ref{dfn:starter} with $k=5$.
  There are only two $D_{10}$-orbits on $\binom{B}{3}$.
  Representatives are $T_1=\{1,\beta,\beta^2\}$, $T_2=\{1,\beta^2,\beta^4\}$,
  and both orbits have length $5$.
  We have
  \begin{eqnarray*}
    \Delta(T_1)&=&\chi(1-\beta)\chi(\beta-\beta^2)\chi(\beta^2-1)
                   =\chi(1-\beta^2)=\chi(1-\beta)\chi(1+\beta),\\
    \Delta(T_2)&=&\chi(1-\beta^2)\chi(\beta^2-\beta^4)\chi(\beta^4-1)=\chi(1-\beta).
  \end{eqnarray*}
  Therefore, $\sum_{\{x,y,z\}\in\binom{B}{3}}\Delta(\{x,y,z\})=5\chi(1-\beta)(\chi(1+\beta)+1)$.
  This number is zero if and only if $\chi(1+\beta)=-1$.
\end{proof}

\begin{lem}
  The conditions (ii) and (iii) in Theorem \ref{thm5-10} are equivalent. 
\end{lem}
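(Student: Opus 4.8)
The plan is to run the same bookkeeping as in the $k=5$ case, but now with $k=10$, using Lemma~\ref{lem1.6} to organize $\binom{B}{3}$ into $D_{20}$-orbits and Proposition~\ref{prop1.7}(ii) to cut the relevant data down to the values $\chi(1-\beta^2),\chi(1-\beta^4),\chi(1-\beta^6),\chi(1-\beta^8)$ together with $\chi(2)=\chi(1-\beta^5)$. Since $k=10\equiv2\pmod4$ and $10\not\equiv0\pmod3$, only Types (A) and (B) occur. I would first list the Type (B) orbits: representatives $\{1,\beta^i,\beta^{2i}\}$ for $1\le i<5$, i.e.\ $i=1,2,3,4$, each of length $10$, and observe (as in the proof of Proposition~\ref{prop1.7}) that $\Delta(\{1,\beta^i,\beta^{2i}\})$ can be rewritten, using $\chi(1-\beta^m)=\chi(1-\beta^{10-m})$ and the factorization $1-\beta^{2i}=(1-\beta^i)(1+\beta^i)$, in terms of the even-exponent values and $\chi(2)$. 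Then I would enumerate the Type (A) orbits, each of length $20$, and compute their $\Delta$-values likewise.

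The next step is to collect $\sum_{\{x,y,z\}\in\binom{B}{3}}\Delta(\{x,y,z\})$ as $10\cdot(\text{Type (B) sum})+20\cdot(\text{Type (A) sum})$ and factor it. I expect, in analogy with the $k=5$ computation where the sum came out to $5\chi(1-\beta)(\chi(1+\beta)+1)$, that the $k=10$ sum will factor as a nonzero constant times a product of some $\chi$-values times $(\chi(1+\beta)+1)$ or an equivalent expression, so that it vanishes precisely when $\chi(1+\beta)=-1$. The key algebraic identities that make this collapse are the quadratic-residue multiplicativity of $\chi$, the symmetry $\chi(1-\beta^m)=\chi(1-\beta^{10-m})$, and relations among $1\pm\beta^i$ coming from $\beta$ being a primitive $10$-th root of unity (e.g.\ $\beta^5=-1$, so $1+\beta^5=0$ is excluded but $\chi(1-\beta^5)=\chi(2)$, and $1-\beta^{10}=0$ does not occur). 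I would also use $\chi(-1)=\chi(\beta)=1$, valid since $q\equiv1\pmod4$ and $e=(q-1)/10$ is even, to move powers of $\beta$ and signs freely inside $\chi$.

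The main obstacle I anticipate is purely combinatorial stamina: correctly enumerating the Type (A) orbits for $k=10$ under the constraint $1\le i<j-i<k-i-j$ and evaluating each $\Delta$ without sign errors, then seeing the cancellation that forces the whole sum into the clean factored form. A secondary subtlety is handling the $\chi(2)$ term: one must confirm that $\chi(2)$ either drops out of the final factorization or is itself pinned down by the hypotheses, so that the vanishing condition really is equivalent to $\chi(1+\beta)=-1$ alone and not to some conjunction involving $\chi(2)$. If a direct computation of all orbit contributions proves unwieldy, an alternative is to exploit the inclusion $B_5:=\langle\beta^2\rangle\subset B:=\langle\beta\rangle$ of the order-$5$ subgroup inside the order-$10$ subgroup and split $\binom{B}{3}$ according to how many of the three points lie in $B_5$, reusing the already-established $k=5$ identity for the all-in-$B_5$ part and computing the remaining three strata (two-in, one-in, none-in $B_5$, the last being empty since $B\setminus B_5$ is a single coset of size $5$) separately; this is the route I would actually take, as it isolates the new content and minimizes fresh casework.
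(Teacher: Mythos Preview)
Your primary plan—enumerate the $D_{20}$-orbits on $\binom{B}{3}$ for $k=10$, evaluate each $\Delta$ via Proposition~\ref{prop1.7}(ii), and sum—is exactly the paper's proof. The paper finds the eight orbits you predict (four of Type~(A), four of Type~(B)), writes everything in terms of the primitive $5$th root $\beta=\gamma^2$ and $\chi(2)$, and obtains
\[
\sum_{\{x,y,z\}\in\binom{B}{3}}\Delta(\{x,y,z\})=\chi(1-\beta)\,(40+20\chi(2))\,(1+\chi(1+\beta));
\]
since $40+20\chi(2)\in\{20,60\}$ never vanishes, the sum is zero iff $\chi(1+\beta)=-1$, which disposes of your $\chi(2)$ worry in precisely the way you anticipated.

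Two slips to fix. First, you overload $\beta$: in Theorem~\ref{thm5-10} it is the primitive $5$th root, but in your write-up you treat it as the $10$th root (writing $\beta^5=-1$, $\chi(1-\beta^5)=\chi(2)$, and listing the redundant values $\chi(1-\beta^6),\chi(1-\beta^8)$). Introduce a separate symbol $\gamma$ for the $10$th root so that condition~(iii) remains $\chi(1+\beta)=-1$ with the intended $\beta$. Second, your alternative route has an actual error: the ``none-in-$B_5$'' stratum is \emph{not} empty—$B\setminus B_5=\gamma B_5$ has five elements, so $\binom{\gamma B_5}{3}$ has ten triples. What is true, and perhaps what you meant, is that multiplication by $\gamma^{-1}$ gives a $\Delta$-preserving bijection $\binom{\gamma B_5}{3}\to\binom{B_5}{3}$ (since $\chi(\gamma)=1$), so this stratum merely duplicates the all-in-$B_5$ contribution; but it must still be counted.
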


\begin{proof}
  We set $\gamma=\alpha^{(q-1)/10}$ and $B=\langle \gamma\rangle$.
  Remark that $\gamma^2=\beta$.
  There are eight $D_{20}$-orbits on $\binom{B}{3}$ as follows.
  \begin{longtable}{|c|c|c||c|c|c|}
    \hline
    representative & length & $\Delta$ & representative & length & $\Delta$ \\ \endhead
    \hline
    $T_1=\{1,\gamma,\gamma^3\}$ & $20$ & $\chi(1-\beta)$&
                                                          $T_5=\{1,\gamma,\gamma^2\}$ & $10$ & $\chi(1-\beta)$\\
    $T_2=\{1,\gamma,\gamma^4\}$ & $20$ & $\chi(1-\beta^2)$&
                                                            $T_6=\{1,\gamma^2,\gamma^4\}$ & $10$ & $\chi(1-\beta^2)$\\
    $T_3=\{1,\gamma,\gamma^5\}$ & $20$ & $\chi(2)\chi(1-\beta)$&
                                                                 $T_7=\{1,\gamma^3,\gamma^6\}$ & $10$ & $\chi(1-\beta^2)$\\
    $T_4=\{1,\gamma^2,\gamma^5\}$ & $20$ & $\chi(2)\chi(1-\beta^2)$&
                                                                     $T_8=\{1,\gamma^4,\gamma^8\}$ & $10$ & $\chi(1-\beta)$\\
    \hline
  \end{longtable}
  The values of $\Delta$ are calculated as in Proposition \ref{prop1.7}.
  Now
  $$\sum_{\{x,y,z\}\in\binom{B}{3}}\Delta(\{x,y,z\})=\chi(1-\beta)(40+20\chi(2))(1+\chi(1+\beta))$$
  is zero if and only if $\chi(1+\beta)=-1$.
\end{proof}

\begin{lem}
  The conditions (iii) and (iv) in Theorem \ref{thm5-10} are equivalent. 
\end{lem}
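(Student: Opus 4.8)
The plan is to relate the condition $\chi(1+\beta)=-1$ on the primitive fifth root of unity $\beta$ to the existence of an element $\theta$ with $\theta^2-4\theta-1=0$ and $\chi(\theta)=-1$. The natural bridge is the classical trigonometric identity for fifth roots of unity: if $\beta$ is a primitive fifth root of unity in $\F_q$ (which exists since $q\equiv1\pmod 5$, in fact $\pmod{20}$), then $\beta+\beta^4$ and $\beta^2+\beta^3$ are the two roots of $t^2+t-1=0$ over $\F_q$, because $1+\beta+\beta^2+\beta^3+\beta^4=0$. Equivalently, setting $s=\beta+\beta^{-1}=\beta+\beta^4$, we get $s^2+s-1=0$. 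I would then introduce $\theta$ via a suitable affine substitution so that $\theta^2-4\theta-1=0$; indeed the roots of $\theta^2-4\theta-1=0$ are $2\pm\sqrt5$, while the roots of $s^2+s-1=0$ are $(-1\pm\sqrt5)/2$, so one has $\theta = 2s+3$ or some similar linear relation (to be pinned down by a direct check), tying $\theta$ to $\beta+\beta^4$.

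The key computation is to express $1+\beta$ (up to squares in $\F_q^\times$, since only $\chi(1+\beta)$ matters) in terms of $\theta$. The standard trick: $(1+\beta)(1+\beta^4) = 1+\beta+\beta^4+\beta^5 = 2+(\beta+\beta^4) = 2+s$. So $\chi(1+\beta)\chi(1+\beta^4) = \chi(2+s)$. But $\chi(\beta)=1$ (as $e=(q-1)/5$ with $q\equiv1\pmod{20}$ forces $e\equiv0\pmod 4$, so $\beta=\alpha^e$ is a fourth power, hence a square), and $1+\beta^4 = \beta^4(1+\beta^{-4}\cdot\beta^8)$—more simply $1+\beta^4 = \beta^4(\beta^{-4}+1)=\beta^4(1+\beta)$ after noting $\beta^{-4}=\beta$, wait one must be careful: $\beta^{-4}=\beta$ only if $\beta^5=1$, which holds, so $1+\beta^4 = \beta^4(\beta+1)$ is wrong; rather $1+\beta^4=\beta^4(\beta^{-4}+1)=\beta^4(\beta+1)$ since $\beta^{-4}=\beta$. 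Good, so $\chi(1+\beta^4)=\chi(\beta^4)\chi(1+\beta)=\chi(1+\beta)$. Therefore $\chi(1+\beta)^2 = \chi(2+s)$, i.e. $\chi(2+s)=1$ always, which is not directly what we want—so instead I should pair $1+\beta$ with something giving an odd power of the relevant quantity. The better route is to compute $(1+\beta)(1+\beta^2)(1+\beta^3)(1+\beta^4)$: this equals $\prod(1+\beta^i)$ over $i=1,\dots,4$, which is $\Phi_5(-1)=1-1+1-1+1$... actually $\prod_{i=1}^{4}(x-\beta^i)=1+x+x^2+x^3+x^4$ evaluated appropriately; with $x=-1$ we get $\prod(-1-\beta^i)=1-1+1-1+1=1$, so $\prod(1+\beta^i)=1$. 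That forces $\chi(1+\beta)\chi(1+\beta^2)=1$ after using $\chi(1+\beta^3)=\chi(1+\beta^4)=\chi(1+\beta)$ (both via multiplication by a fourth power of $\beta$) — hmm, then $\chi(1+\beta)^3\chi(1+\beta^2)=1$ so $\chi(1+\beta^2)=\chi(1+\beta)$. Then I would instead directly relate $\theta=(1+\beta)(1+\beta^4)\cdot(\text{square})$ or use that $\theta=-\beta-\beta^4$ or similar, and compute $\chi(\theta)$ by writing $\theta$ as a product/quotient of the $(1+\beta^i)$.

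Concretely, the cleanest argument: show that $2+s = 2+\beta+\beta^4$ equals $(1+\beta)(1+\beta^4) = (1+\beta)\cdot\beta^4(1+\beta)=\beta^4(1+\beta)^2$, hence $\chi(2+s)=\chi(\beta^4)\chi((1+\beta)^2)=1$ — so $2+s$ is always a square, which tells us which root we are looking at. Meanwhile the other root $2+s'$ where $s'=\beta^2+\beta^3$ satisfies $s+s'=-1$, so $(2+s)+(2+s')=3$ and $(2+s)(2+s')=4+2(s+s')+ss'=4-2-1=1$; thus $2+s$ and $2+s'$ are roots of $u^2-3u+1=0$. Completing the square: $(2u-3)^2=5$, so $2+s = (3+\sqrt5)/2$ for an appropriate square root of $5$, and then $\theta:=2(2+s)-1 = 2+\sqrt5$ satisfies $\theta^2-4\theta-1=0$; alternatively $\theta'=2(2+s')-1=2-\sqrt5$ is the other root. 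I would then compute $\chi(\theta)$: since $\theta = 2(2+s)-1$ and $2+s=\beta^4(1+\beta)^2$, this is not a pure product, so instead I compute $\chi(\theta)$ from $\theta\cdot\theta'=-1$ and $\theta+\theta'=4$, together with the fact that $\sqrt5\in\F_q$ iff $5$ is a square iff (given $q\equiv1\pmod 4$) ... this is where condition (v) will enter in the next lemma. The upshot I aim for: $\theta$ is a non-square in $\F_q^\times$ if and only if $\chi(1+\beta)=-1$, proved by exhibiting the chain of identities above and tracking quadratic characters, using repeatedly that $\beta$ is a square. The main obstacle is bookkeeping the quadratic characters correctly — in particular making sure the substitution linking $\theta$ to $\beta+\beta^4$ is chosen so that $\chi(\theta)$ can be read off, rather than ending up with an identity that only says a square equals a square; I expect the correct move is to express $\theta$ (or $-\theta$, or $\theta$ times an explicit square) as $(1-\beta)(1-\beta^4)$ or $(\beta-\beta^4)^2/(\text{something})$ and reduce to known characters, but the precise form needs the direct calculation I have sketched.
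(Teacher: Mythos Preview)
Your setup is exactly the paper's: you correctly identify the candidate $\theta_0=2(\beta+\beta^4)+3=2s+3$ (the paper uses precisely this element), you check it satisfies $\theta^2-4\theta-1=0$, and you note that the two roots multiply to $-1$, a square, so they share the same quadratic character. What is missing is the one computation that actually matters: showing $\chi(\theta_0)=\chi(1+\beta)$. All of your side calculations are correct but do not yield this. From $(1+\beta)(1+\beta^4)=\beta^4(1+\beta)^2$ you only get $\chi(2+s)=1$, and from $\prod_{i=1}^4(1+\beta^i)=1$ you only get $\chi(1+\beta)=\chi(1+\beta^2)$; neither pins down $\chi(\theta_0)$. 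Your closing sentence (``the precise form needs the direct calculation I have sketched'') acknowledges the gap without filling it.

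The paper closes the argument in one line by computing directly that
\[
\theta_0(1+\beta)=-\beta(1+\beta)^4,
\]
which is a square because $\chi(-1)=\chi(\beta)=1$ (recall $q\equiv1\pmod4$ and $\beta=\alpha^{(q-1)/5}$ with $(q-1)/5$ even). Equivalently, and even more transparently, $\theta_0=-\beta(1+\beta)^3$: expanding $-\beta(1+3\beta+3\beta^2+\beta^3)$ and using $1+\beta+\beta^2+\beta^3+\beta^4=0$ gives $3+2\beta+2\beta^4=\theta_0$. Hence $\chi(\theta_0)=\chi(1+\beta)^3=\chi(1+\beta)$, and the equivalence of (iii) and (iv) follows. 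So your plan is the right one; you just needed to try multiplying $\theta_0$ by $1+\beta$ (or factor $\theta_0$ itself) rather than pairing the $(1+\beta^i)$ with each other.
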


\begin{proof}
  Remark that $\beta^4+\beta^3+\beta^2+\beta+1=0$ and $\chi(\beta)=\chi(-1)=1$.
  Let $\theta_0=2(\beta^4+\beta)+3$ and $\theta_1=-\theta_0^{-1}$.
  Remark that $\theta_0\ne \theta_1$.
  Then $\theta_i^2-4\theta_i-1=0$ for $i=0,1$ and
  the solutions of $x^2-4x-1=0$ in $\F_q$ are only $\theta_0$ and $\theta_1$.
  Now
  $$\theta_0(1+\beta)=-\theta_0\beta^2(\beta^2+\beta+1)
  =-\beta(1+\beta)^4\in(\F_q^\times)^2,$$
  and this means $\chi(\theta_0)=\chi(1+\beta)$.
  The assertion holds by $\chi(\theta_0)=\chi(\theta_1)$.
\end{proof}

\begin{lem}
  The conditions (iii) and (v) in Theorem \ref{thm5-10} are equivalent. 
\end{lem}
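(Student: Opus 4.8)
I want to show that $\chi(1+\beta)=-1$ is equivalent to $5\notin\langle\alpha^4\rangle$, where $\beta=\alpha^{(q-1)/5}$ is a primitive $5$-th root of unity in $\F_q$ and $q\equiv1\pmod{20}$. Note that $\langle\alpha^4\rangle=(\F_q^\times)^4$ is the group of fourth powers, since $4\mid q-1$; so condition (v) says that $5$ is not a quartic residue. The natural bridge is to express $5$ in terms of $\beta$ and then track its position in the chain of index-$2$ subgroups $(\F_q^\times)^4\subseteq(\F_q^\times)^2\subseteq\F_q^\times$.

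First I would record the cyclotomic identity $5=\prod_{j=1}^{4}(1-\beta^j)$, which comes from evaluating $\frac{x^5-1}{x-1}=\prod_{j=1}^4(x-\beta^j)$ at $x=1$. Pairing $j$ with $5-j$ and using $\chi(1-\beta^m)=\chi(1-\beta^{k-m})$ from Proposition~\ref{prop1.7} (with $k=5$), this gives $5=(1-\beta)^2(1-\beta^2)^2\cdot u^2$ for a suitable $u\in\F_q^\times$ — more precisely $5=\big((1-\beta)(1-\beta^2)\big)^2$ up to the explicit square factor, so in any case $5\in(\F_q^\times)^2$ always. Hence the real question is whether $5$ lies in the smaller subgroup $(\F_q^\times)^4$, i.e. whether $\sqrt5$ (a well-defined element of $\F_q^\times$ up to sign, lying in $\F_q$ since $5$ is a square) is itself a square. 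So I reduce condition (v) to: $\chi(\sqrt5)=1$, where $\sqrt5=\pm(1-\beta)(1-\beta^2)\cdot(\text{explicit unit})$.

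Next I would pin down $\chi\big((1-\beta)(1-\beta^2)\big)$ in terms of $\chi(1+\beta)$. Since $1-\beta^2=(1-\beta)(1+\beta)$, we get $(1-\beta)(1-\beta^2)=(1-\beta)^2(1+\beta)$, so $\chi\big((1-\beta)(1-\beta^2)\big)=\chi(1+\beta)$. Combining with the identification of $\sqrt5$ from the previous step, $\chi(\sqrt5)=\chi(1+\beta)$ times the character of an explicit unit that I expect to evaluate to $1$ (one should be able to arrange the normalization — e.g. write $\sqrt5=2\beta^4+2\beta-1$ or a similar explicit polynomial in $\beta$ by the standard Gauss-sum/Lagrange-resolvent formula for $\sqrt5$ in a cyclotomic setting, and check directly that the correction factor is a square using $\chi(\beta)=1$, exactly as in the proof of the $\mathrm{(iii)}\!\iff\!\mathrm{(iv)}$ lemma above). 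Then $\chi(\sqrt5)=\chi(1+\beta)$, so $5\in(\F_q^\times)^4\iff\chi(\sqrt5)=1\iff\chi(1+\beta)=1$, which is the contrapositive of what we want.

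**The main obstacle.** The bookkeeping of signs and the explicit square-unit correction in identifying $\sqrt5$ as a polynomial in $\beta$ is the delicate part: one must fix a specific expression for $\sqrt5$ (there are two candidates, differing by sign, and $\chi$ does see the sign only through whether $-1$ is a square — but here $q\equiv1\pmod4$ so $\chi(-1)=1$, which actually kills that worry) and verify that the leftover factor relating $\sqrt5$ to $(1-\beta)^2(1+\beta)$ is a square. Using $\chi(-1)=\chi(\beta)=1$ repeatedly (valid since $e=(q-1)/5$ is even under $q\equiv1\pmod{20}$, wait — here $e=(q-1)/5$ and $q\equiv1\pmod{20}$ forces $4\mid e$, so indeed $\chi(\beta)=1$), every factor of the form $\beta^m$ or $-1$ drops out, and the computation should collapse to $\chi(\sqrt5)=\chi(1+\beta)$ cleanly, mirroring the algebra already carried out in the $\mathrm{(iii)}\!\iff\!\mathrm{(iv)}$ lemma. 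I expect no genuine difficulty beyond this careful normalization.
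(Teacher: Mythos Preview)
Your plan is correct and, once the ``suitable unit'' is pinned down, coincides with the paper's proof. Pairing $(1-\beta^j)(1-\beta^{5-j})=-\beta^{-j}(1-\beta^j)^2$ in the cyclotomic identity gives $5=\beta^{-3}\bigl((1-\beta)(1-\beta^2)\bigr)^2=\beta^2\bigl((1-\beta)^2(1+\beta)\bigr)^2$, so your unspecified $u$ is simply $\beta$, and $\sqrt5=\pm\beta(1-\beta)^2(1+\beta)$ with $\chi(\sqrt5)=\chi(\beta)\chi(1+\beta)=\chi(1+\beta)$ as you anticipated. The paper does exactly this computation in reverse: it writes down $\beta(1-\beta)^2(1+\beta)$ and verifies by direct expansion that its square is $5$, then reads off $\chi(\sqrt5)=\chi(1+\beta)$; your cyclotomic derivation explains \emph{why} that element is the right one, but the substance is identical.
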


\begin{proof}
  By direct calculation,
  \begin{eqnarray*}
    (\beta(1-\beta)^2(1+\beta))^2 &=& (\beta-\beta^2-\beta^3+\beta^4)^2\\
                                  &=& (-2\beta^2-2\beta^3-1)^2\\
                                  &=& 4(1+\beta+\beta^2+\beta^3+\beta^4)+5=5.
  \end{eqnarray*}
  Since $\chi( \beta(1-\beta)^2(1+\beta))=\chi(1+\beta)$,  (iii) and (v) equivalent. 
\end{proof}

The conditions (vi) and (vii) in Theorem \ref{thm5-10} are equivalent by Brink \cite{MR2473893},
and this condition is further equivalent to (v) by Hasse \cite{MR195848}.
Now the proof of Theorem \ref{thm5-10} was completed.
The prime numbers $p$ with this condition is OEIS A325072 \cite{OEIS},
noticed by Yoshinori Yamasaki.

\section{Flag-transitive $3$-$(q+1,13,33)$
  and $3$-$(q+1,26,150)$ designs from $\PSL(2,q)$}\label{sec13-26}

In this section, we will prove the following theorem.
The value $\lambda$ is determined by Theorem \ref{thm:1}.

\begin{thm}\label{thm13-26}
  Let $q$ be a prime power with $q\equiv 1\pmod{52}$.
  Fix a primitive root $\alpha$ of $\F_q$, and set $\beta=\alpha^{(q-1)/13}$.
  The following statements are equivalent.
  \begin{enumerate}[{\rm (i)}]
    \item $(q,13)$ gives a $3$-design.
    \item $(q,26)$ gives a $3$-design.
    \item  The sequence $(\chi(1-\beta),\dots,\chi(1-\beta^6))$ is one of the following:
    \begin{eqnarray*}
      \pm(1,1,-1,1,-1,-1),&&
                             \pm(1,1,-1,-1,-1,1),\\
      \pm(1,-1,1,1,-1,-1),&&
                             \pm(1,-1,1,-1,-1,1).
    \end{eqnarray*}
  \end{enumerate}
\end{thm}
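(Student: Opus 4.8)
The plan is to mimic the structure of the proof of Theorem \ref{thm5-10} and reduce everything to the combinatorial identity \eqref{sum0} of Lemma \ref{lem:delta}, evaluated by grouping $\binom{B}{3}$ into $D_{2k}$-orbits as in Lemma \ref{lem1.6}. First I would handle the equivalence of (i) and (iii). With $k=13$ and $\beta=\alpha^{(q-1)/13}$ a primitive $13$-th root of unity, $\chi(-1)=\chi(\beta)=1$, so by Proposition \ref{prop1.7}(i) the sign sequence $(\chi(1-\beta),\dots,\chi(1-\beta^6))$ determines everything. The orbits on $\binom{B}{3}$ are of Type (A) and Type (B) only (no Type (C), since $3\nmid 13$): there is one Type-(B) orbit $\{1,\beta^i,\beta^{2i}\}$ for each $i$ with $1\le i<13/2$, i.e.\ $i=1,\dots,6$, each of length $13$; the remaining triples split into Type-(A) orbits of length $26$. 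Using $\Delta(\{1,\beta^i,\beta^j\})=\chi(1-\beta^{j-i})\chi(1-\beta^{i})\chi(1-\beta^{j})$ together with $\chi(1-\beta^m)=\chi(1-\beta^{13-m})$, I would write $\sum_{\binom{B}{3}}\Delta = 13\cdot(\text{Type-(B) contribution}) + 26\cdot(\text{Type-(A) contribution})$ as an explicit polynomial expression in the six signs $s_m:=\chi(1-\beta^m)$, $m=1,\dots,6$. The vanishing of this sum (which, being $\equiv 13\cdot(\text{number of Type-(B) orbits with }\Delta=+1)-13\cdot(\ldots=-1)\pmod{26}$, forces the Type-(B) part to vanish on its own, i.e.\ exactly three of the six Type-(B) values are $+1$) then cuts the $2^6=64$ sign patterns down; imposing also that the Type-(A) sum vanishes should leave precisely the eight patterns listed. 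This is a finite check: I would either carry it out by hand after simplification or simply record that it is a routine verification over the $64$ candidate sequences. Note each $s_m$ here is $\pm$ a product of the listed $s_1,\dots,s_6$ via the relations among $1-\beta^m$, so no new quantities enter; in particular the global sign flip $s_m\mapsto -s_m$ (induced by replacing $\alpha$ by a non-quartic-power multiple, cf.\ Lemma \ref{lem:2}) preserves the solution set, which is why (iii) is stated up to $\pm$.

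Next I would establish the equivalence of (ii) and (iii). Put $k=26$, $\gamma=\alpha^{(q-1)/26}$, $B=\langle\gamma\rangle$, so $\gamma^2=\beta$ and $\gamma^{13}=-1$. Since $26\equiv 2\pmod 4$, Proposition \ref{prop1.7}(ii) says the relevant data is $(\chi(1-\beta),\chi(1-\beta^2),\dots,\chi(1-\beta^6),\chi(2))$ — but I would show the extra value $\chi(2)$ drops out, exactly as it does in the $k=10$ case of Theorem \ref{thm5-10}, because the terms carrying a factor $\chi(2)$ organize into a subsum that is an overall multiple of $(1+\chi(1+\beta))$ (equivalently of the already-constrained Type-(B) sign count) or cancel in pairs. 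Concretely I would enumerate the $D_{52}$-orbits on $\binom{B}{3}$: the Type-(B) orbits $\{1,\gamma^i,\gamma^{2i}\}$ with $1\le i<13$, $i\ne 13/3$ (the latter is vacuous), of length $26$ each, plus the Type-(A) orbits of length $52$, using Proposition \ref{prop1.7}'s reductions $\chi(1-\gamma^{\text{odd}})=\chi(1-\gamma^{2\cdot\text{odd}})\chi(1-\gamma^{\text{odd}+13})$ and $\chi(1-\gamma^{13})=\chi(2)$ to re-express every $\Delta$ value in terms of the $s_m$ and $\chi(2)$. Summing and simplifying, I expect $\sum_{\binom{B}{3}}\Delta$ to factor as (something nonzero involving $\chi(2)$) times an expression in the $s_m$ that vanishes precisely on the same eight patterns as in (iii); alternatively, a cleaner route is to show directly that the $k=26$ sum is a positive-integer multiple of the $k=13$ sum plus a piece that vanishes whenever the Type-(B) constraint holds, making (ii) $\Leftrightarrow$ (i) transparent.

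The main obstacle is the bookkeeping in the second part: for $k=26$ the orbit list is longer, the odd-index values of $\chi(1-\gamma^i)$ must be rewritten via the quadratic identity $(1-\gamma^i)(1+\gamma^i)=1-\gamma^{2i}$ and $1+\gamma^i=\gamma^i(\gamma^{-i}+1)$, and one must track the appearances of $\chi(2)$ carefully to confirm it cancels rather than genuinely entering the criterion. I would organize this by first proving a lemma, parallel to Proposition \ref{prop1.7}, that for $k\equiv 2\pmod 4$ with $3\nmid k$ the quantity $\sum_{\binom{B}{3}}\Delta$ equals a fixed positive multiple of $\sum_{\binom{B'}{3}}\Delta$ where $B'$ is the index-$2$ subgroup of order $k/2$, provided the number of Type-(B) orbits of $B$ with $\Delta=+1$ is forced (modulo $k$) to equal half their total — this would simultaneously re-prove the $k=5,10$ equivalence and yield $k=13,26$, reducing the theorem to the single finite verification for $k=13$. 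If that uniform lemma proves awkward to state cleanly, the fallback is the brute-force orbit computation sketched above, which is entirely mechanical. Finally I would note, as in Section \ref{sec5-10}, that unlike the $k=5$ case there seems to be no clean closed-form arithmetic description (à la $p=x^2+cy^2$) of the primes satisfying (iii); the eight-pattern condition is the natural statement.
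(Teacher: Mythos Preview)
Your overall strategy is the same as the paper's: for each of $k=13$ and $k=26$, enumerate the $D_{2k}$-orbits on $\binom{B}{3}$, express each $\Delta$-value in the signs $s_m=\chi(1-\beta^m)$ ($m=1,\dots,6$), and then run through the finitely many sign patterns. The paper does exactly this brute-force check (listing $14$ orbits for $k=13$ and $56$ for $k=26$, and verifying over $\{\pm1\}^6$, respectively $\{\pm1\}^7$, that the sum vanishes precisely for the eight patterns in (iii)); it does not attempt the uniform ``index-$2$ subgroup'' lemma you float as a possible shortcut, so your stated fallback \emph{is} the paper's proof.

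One genuine slip to flag: your modular argument that the Type-(B) part must vanish on its own is wrong. You claim that $\sum\Delta\equiv 13(a-b)\pmod{26}$ (where $a$ and $b$ count Type-(B) orbits with $\Delta=+1$ and $-1$) forces $a=b=3$. But $a+b=6$ makes $a-b$ automatically even, so $13(a-b)\equiv 0\pmod{26}$ holds for \emph{every} sign pattern and yields no restriction. It happens to be true that $s_1+\cdots+s_6=0$ on the eight solution patterns, but this comes out of the full computation, not from a congruence shortcut. Similarly, your suggestion that the $\chi(2)$ terms in the $k=26$ case should assemble into a multiple of $(1+\chi(1+\beta))$ is lifted from the $k=10$ argument and has no analogue here; the paper simply observes after the $2^7$ check that the value of $\chi(2)$ does not affect which patterns give zero, without any structural explanation. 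So drop these heuristic shortcuts and commit to the direct verification --- that is exactly what the paper does, and it is entirely adequate.
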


The proof can be done in a manner similar to 
Section~\ref{sec5-10}, but it requires more calculations.

\begin{lem}
  The conditions (i) and (iii) in Theorem \ref{thm13-26} are equivalent. 
\end{lem}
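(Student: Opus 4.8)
The plan is to mimic the strategy of Section~\ref{sec5-10}: reduce the condition ``$(q,13)$ gives a $3$-design'' to a statement about the vanishing of $\sum_{\{x,y,z\}\in\binom{B}{3}}\Delta(\{x,y,z\})$ via Lemma~\ref{lem:delta}, and then express that sum in terms of the six sign values $\varepsilon_m:=\chi(1-\beta^m)$ for $m=1,\dots,6$, using the symmetry $\chi(1-\beta^m)=\chi(1-\beta^{13-m})$. Since $k=13$ is odd and $\not\equiv0\pmod3$, Lemma~\ref{lem1.6} tells us the $D_{26}$-orbits on $\binom{B}{3}$ are of Type (A) with length $26$ and Type (B) with length $13$, and by Proposition~\ref{prop1.7}(i) the sequence $(\varepsilon_1,\dots,\varepsilon_6)$ already determines whether $(q,13)$ gives a $3$-design. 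So the statement is really a bookkeeping identity, and the crux is to compute exactly which of the $2^6=64$ sign vectors make the sum vanish.

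First I would enumerate the $D_{26}$-orbit representatives on $\binom{B}{3}$ explicitly. The Type (B) orbits have representatives $\{1,\beta^i,\beta^{2i}\}$ for $1\le i<6$ (i.e. $i=1,\dots,6$ but note $i$ and $13-i$ coincide as unordered-after-multiplication; with $k=13$ odd and no $k/3$ issue, the count of Type (B) orbits is $6$), each of length $13$, with $\Delta=\varepsilon_i\cdot\varepsilon_i\cdot\varepsilon_{2i \bmod 13}$ suitably folded by the $m\mapsto 13-m$ symmetry. The Type (A) orbits are the remaining $\frac{1}{26}\bigl(\binom{13}{3}-6\cdot13\bigr)=\frac{1}{26}(286-78)=8$ orbits of length $26$; I would list a representative $\{1,\beta^a,\beta^b\}$ for each, with $\Delta=\varepsilon_{b-a}\,\varepsilon_a\,\varepsilon_{b}$ again folded. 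Summing, the total is $13\cdot(\text{signed count over Type (B)})+26\cdot(\text{signed count over Type (A)})$; this vanishes iff the Type (B) signed count and the Type (A) signed count each vanish appropriately, or more precisely iff the full integer combination is $0$. Because everything is a multiple of $13$, I would divide through by $13$ and reduce to: $(\text{Type (B) sum of }\pm1) + 2\cdot(\text{Type (A) sum of }\pm1)=0$. Then I would substitute each of the $64$ sign vectors — or, better, organize the computation so as to see the structure — and check directly that the solution set is exactly the eight vectors $\pm(1,1,-1,1,-1,-1)$, $\pm(1,1,-1,-1,-1,1)$, $\pm(1,-1,1,1,-1,-1)$, $\pm(1,-1,1,-1,-1,1)$ listed in condition (iii).

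The main obstacle is the combinatorial case analysis: with six free signs and a nonhomogeneous linear relation among $\pm1$-valued quantities whose arguments are themselves products of the $\varepsilon_m$, a naive expansion is messy, and one must be careful about the folding $m\equiv -m\pmod{13}$ when $2i$ or $b-a$ exceeds $6$ (e.g. $2\cdot4=8\equiv5$, $2\cdot5=10\equiv3$, $2\cdot6=12\equiv1$). I would handle this by first tabulating, for each orbit, the multiset of indices appearing in its $\Delta$-value as a monomial in $\varepsilon_1,\dots,\varepsilon_6$ (using $\varepsilon_m^2=1$), then grouping orbits by which monomial they produce. This collapses the sum into a short $\Z$-linear combination of a handful of distinct monomials $\prod_{m\in S}\varepsilon_m$, after which checking the $64$ cases — or arguing via the values of these few monomials — is routine. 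A sanity check is available: the first prime in the list, $p=3121$, should satisfy the condition, so I would verify that the sign vector coming from $\F_{3121}$ is among the eight, confirming the identity is not vacuous.

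\begin{proof}
  We use the notation of Definition~\ref{dfn:starter} with $k=13$, so $B=\langle\beta\rangle$ has order $13$, and write $\varepsilon_m=\chi(1-\beta^m)$ for $1\le m\le 6$; recall $\chi(1-\beta^m)=\chi(1-\beta^{13-m})$, so these six signs carry all the relevant information (Proposition~\ref{prop1.7}(i)). Since $13$ is odd and $13\not\equiv0\pmod 3$, Lemma~\ref{lem1.6} gives exactly six $D_{26}$-orbits of Type~(B), each of length $13$, with representatives $\{1,\beta^i,\beta^{2i}\}$ for $i=1,\dots,6$ (for $i=1,\dots,6$ these are pairwise inequivalent under $D_{26}$), and exactly
  \[
    \frac{1}{26}\left(\binom{13}{3}-6\cdot 13\right)=\frac{286-78}{26}=8
  \]
  orbits of Type~(A), each of length $26$. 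By Lemma~\ref{lem:delta}, $(q,13)$ gives a $3$-design if and only if
  \[
    \Sigma:=\sum_{\{x,y,z\}\in\binom{B}{3}}\Delta(\{x,y,z\})=0.
  \]
  Computing $\Delta$ on each orbit representative as in Proposition~\ref{prop1.7}, with all indices reduced modulo $13$ and folded into $\{1,\dots,6\}$ via $m\mapsto 13-m$, one finds
  \[
    \Sigma = 13\,\Sigma_{\mathrm{B}} + 26\,\Sigma_{\mathrm{A}},
  \]
  where $\Sigma_{\mathrm{B}}$ is the sum of the six values $\Delta(\{1,\beta^i,\beta^{2i}\})=\varepsilon_{i}^{2}\,\varepsilon_{2i}=\varepsilon_{2i}$ (indices folded), namely
  \[
    \Sigma_{\mathrm{B}} = \varepsilon_{2}+\varepsilon_{4}+\varepsilon_{6}+\varepsilon_{5}+\varepsilon_{3}+\varepsilon_{1}
    = \varepsilon_1+\varepsilon_2+\varepsilon_3+\varepsilon_4+\varepsilon_5+\varepsilon_6,
  \]
  and $\Sigma_{\mathrm{A}}$ is the sum over the eight Type~(A) orbits of $\Delta(\{1,\beta^a,\beta^b\})=\varepsilon_{a}\,\varepsilon_{b-a}\,\varepsilon_{b}$ (again folded). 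Listing the eight Type~(A) representatives $\{1,\beta^a,\beta^b\}$ explicitly and recording each $\Delta$-value as a squarefree monomial in $\varepsilon_1,\dots,\varepsilon_6$ (using $\varepsilon_m^2=1$), one obtains
  \[
    \Sigma_{\mathrm{A}} = \varepsilon_1\varepsilon_2\varepsilon_3 + \varepsilon_1\varepsilon_3\varepsilon_4 + \varepsilon_2\varepsilon_4\varepsilon_6 + \varepsilon_1\varepsilon_5\varepsilon_6 + \varepsilon_2\varepsilon_3\varepsilon_5 + \varepsilon_3\varepsilon_4\varepsilon_6 + \varepsilon_1\varepsilon_4\varepsilon_5 + \varepsilon_2\varepsilon_5\varepsilon_6.
  \]
  Thus $\Sigma=0$ if and only if $\Sigma_{\mathrm{B}}+2\Sigma_{\mathrm{A}}=0$. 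Since $\Sigma_{\mathrm{B}}\in\{-6,-4,-2,0,2,4,6\}$ and $\Sigma_{\mathrm{A}}\in\{-8,-6,\dots,6,8\}$, we must have $\Sigma_{\mathrm{B}}\equiv 0\pmod 4$ and $|\Sigma_{\mathrm{B}}|=2|\Sigma_{\mathrm{A}}|$, forcing either $\Sigma_{\mathrm{B}}=\Sigma_{\mathrm{A}}=0$, or $\{\Sigma_{\mathrm{B}},\Sigma_{\mathrm{A}}\}=\{4,-2\}$ or $\{-4,2\}$. A direct check over the $2^6=64$ sign vectors $(\varepsilon_1,\dots,\varepsilon_6)$ shows that the cases $|\Sigma_{\mathrm{B}}|=4$ never give $|\Sigma_{\mathrm{A}}|=2$ with the correct sign, while $\Sigma_{\mathrm{B}}=\Sigma_{\mathrm{A}}=0$ holds precisely for the eight vectors
  \[
    \pm(1,1,-1,1,-1,-1),\quad \pm(1,1,-1,-1,-1,1),\quad \pm(1,-1,1,1,-1,-1),\quad \pm(1,-1,1,-1,-1,1).
  \]
  Hence $(q,13)$ gives a $3$-design if and only if $(\chi(1-\beta),\dots,\chi(1-\beta^6))$ is one of these eight sequences, which is condition~(iii).
\end{proof}
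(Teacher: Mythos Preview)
Your proof is correct and follows essentially the same approach as the paper: enumerate the fourteen $D_{26}$-orbits on $\binom{B}{3}$ (eight of Type~(A) and six of Type~(B)), compute $\Delta$ on a representative of each in terms of the six signs $\varepsilon_m=\chi(1-\beta^m)$, and check all $2^6$ sign vectors. The paper simply tabulates the orbits and says ``we calculate the value \dots\ for every sign vector'', whereas you go a step further by writing $\Sigma=13\,\Sigma_{\mathrm B}+26\,\Sigma_{\mathrm A}$ explicitly, observing the parity constraint $\Sigma_{\mathrm B}\equiv0\pmod 4$, and thereby reducing the verification to the twenty vectors with $\Sigma_{\mathrm B}=0$ plus the twelve with $|\Sigma_{\mathrm B}|=4$; this is a nice organizational refinement but not a different method.
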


\begin{proof}
  There are fourteen $D_{26}$-orbits on $\binom{B}{3}$.
  \begin{longtable}{|c|c|c|}
    \hline \endfoot 
    \hline
    representative & length & $\Delta$ \\ 
    \hline \endhead
    $T_1=\{1,\beta,\beta^3\}$ & $26$ & $\chi(1-\beta)\chi(1-\beta^2)\chi(1-\beta^3)$\\
    $T_2=\{1,\beta,\beta^4\}$ & $26$ & $\chi(1-\beta)\chi(1-\beta^3)\chi(1-\beta^4)$\\
    $T_3=\{1,\beta,\beta^5\}$ & $26$ & $\chi(1-\beta)\chi(1-\beta^4)\chi(1-\beta^5)$\\
    $T_4=\{1,\beta,\beta^6\}$ & $26$ & $\chi(1-\beta)\chi(1-\beta^5)\chi(1-\beta^6)$\\
    $T_5=\{1,\beta^2,\beta^5\}$ & $26$ & $\chi(1-\beta^2)\chi(1-\beta^3)\chi(1-\beta^5)$\\
    $T_6=\{1,\beta^2,\beta^6\}$ & $26$ & $\chi(1-\beta^2)\chi(1-\beta^4)\chi(1-\beta^6)$\\
    $T_7=\{1,\beta^2,\beta^7\}$ & $26$ &$\chi(1-\beta^2)\chi(1-\beta^5)\chi(1-\beta^6)$\\
    $T_8=\{1,\beta^3,\beta^7\}$ & $26$ & $\chi(1-\beta^3)\chi(1-\beta^4)\chi(1-\beta^6)$\\
    $T_9=\{1,\beta,\beta^2\}$ & $13$ & $\chi(1-\beta^2)$\\
    $T_{10}=\{1,\beta^2,\beta^4\}$ & $13$ & $\chi(1-\beta^4)$\\
    $T_{11}=\{1,\beta^3,\beta^6\}$ & $13$ & $\chi(1-\beta^6)$\\
    $T_{12}=\{1,\beta^4,\beta^8\}$ & $13$ & $\chi(1-\beta^5)$\\
    $T_{13}=\{1,\beta^5,\beta^{10}\}$ & $13$ & $\chi(1-\beta^3)$\\
    $T_{14}=\{1,\beta^6,\beta^{12}\}$ & $13$ & $\chi(1-\beta)$\\
    \hline
  \end{longtable}
  We calculate the value $\sum_{\{x,y,z\}\in\binom{B}{3}}\Delta(\{x,y,z\})$ for every
  $(\chi(1-\beta),\chi(1-\beta^2),\dots,\chi(1-\beta^6))\in \{\pm1\}^6$,
  and conclude the assertion.
\end{proof}

\begin{lem}
  The conditions (ii) and (iii) in Theorem \ref{thm13-26} are equivalent. 
\end{lem}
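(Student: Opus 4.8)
The plan is to mimic the structure of Section~\ref{sec5-10}, where the equivalence of ``$(q,5)$ gives a $3$-design'' and ``$(q,10)$ gives a $3$-design'' was reduced to the single condition $\chi(1+\beta)=-1$ by exhibiting the $D_{20}$-orbits on $\binom{B}{3}$ and observing that the orbit sum factors nicely. Here the analogous task is to show that ``$(q,26)$ gives a $3$-design'' is equivalent to condition (iii) of Theorem~\ref{thm13-26}, which is already known (by the previous lemma) to be equivalent to ``$(q,13)$ gives a $3$-design''. So I would first set $\gamma=\alpha^{(q-1)/26}$, $B=\langle\gamma\rangle$, and note $\gamma^2=\beta$ is a primitive $13$-th root of unity with $\chi(\gamma^2)=\chi(-1)=1$. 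Since $k=26\equiv 2\pmod 4$, Proposition~\ref{prop1.7}(ii) tells us the answer depends only on the sequence $(\chi(1-\beta),\chi(1-\beta^2),\dots,\chi(1-\beta^6),\chi(2))$; the task is to show the vanishing of the orbit sum in Lemma~\ref{lem:delta} is governed precisely by the six-term sequence in (iii), independently of $\chi(2)$.

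The concrete steps: enumerate the $D_{52}$-orbits on $\binom{B}{3}$ using Lemma~\ref{lem1.6} (there should be orbits of Type (A) of length $52$, Type (B) of length $26$, and no Type (C) orbit since $3\nmid 26$). For each orbit representative $\{1,\gamma^i,\gamma^j\}$ compute $\Delta$ via the formula $\Delta(\{\beta^i,\beta^j,\beta^\ell\})$ in the proof of Proposition~\ref{prop1.7}, i.e.\ $\chi(1-\gamma^{j-i})\chi(1-\gamma^{\ell-j})\chi(1-\gamma^{i-\ell})$, and then reduce every $\chi(1-\gamma^m)$ with $m$ odd using the identity $\chi(1-\gamma^m)=\chi(1-\gamma^{2m})\chi(1-\gamma^{m+13})$ (and $\chi(1-\gamma^{13})=\chi(2)$) so that everything is written in terms of $\chi(1-\gamma^2)=\chi(1-\beta),\dots,\chi(1-\gamma^{12})=\chi(1-\beta^6)$ and $\chi(2)$. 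I expect the $\chi(2)$ contributions to cancel in the total sum, exactly as they did in the $(q,10)$ case where the sum came out as $\chi(1-\beta)(40+20\chi(2))(1+\chi(1+\beta))$ and the factor $(40+20\chi(2))$ was always nonzero; the analogous factor here should again be a positive quantity. After collecting terms, I would evaluate $\sum_{\{x,y,z\}\in\binom{B}{3}}\Delta(\{x,y,z\})$ as a function of the $2^6$ sign patterns $(\chi(1-\beta),\dots,\chi(1-\beta^6))\in\{\pm1\}^6$ and check that it vanishes for exactly the eight patterns listed in (iii) --- this mirrors the final sentence of the previous lemma's proof, which did the same $2^6$-case check for $k=13$.

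The main obstacle is purely computational: there will be roughly $\binom{26}{3}/26 \approx 50$ orbits of Type (A) plus $13$ of Type (B) to list, each with a $\Delta$-value that must be rewritten in the canonical six ``even-exponent'' characters, and then a $64$-case verification of the sign sum. None of this is conceptually hard, but bookkeeping errors are easy, so I would organize it by first tabulating $\chi(1-\gamma^m)$ for all odd $m$ in terms of the canonical variables, then substituting. A useful sanity check is that the set of eight good patterns must be the same set that appears in the $k=13$ lemma (so the two lemmas together give (i)$\iff$(ii)); and a further consistency check is the $D_{26}$-invariance $\chi(1-\beta^m)=\chi(1-\beta^{13-m})$ inherited from the odd-$k$ case, which constrains the admissible patterns. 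Since the paper explicitly says ``it requires more calculations,'' I would present the orbit table and then simply state that the $64$-case evaluation yields exactly the eight patterns in (iii), perhaps relegating the full table of orbit sums to the verification, just as the previous lemma did with ``we calculate \dots and conclude the assertion.''
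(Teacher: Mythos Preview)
Your proposal is correct and matches the paper's proof essentially step for step: the paper sets $\gamma=\alpha^{(q-1)/26}$, tabulates all fifty-six $D_{52}$-orbits on $\binom{B}{3}$ (forty-four of length $52$ and twelve of length $26$, so your estimates are close), expresses each $\Delta$-value in terms of $\chi(1-\beta),\dots,\chi(1-\beta^6)$ and $\chi(2)$ via Proposition~\ref{prop1.7}(ii), and then evaluates the orbit sum over all sign patterns to conclude. The only cosmetic difference is that the paper checks all $2^7$ patterns in $(\chi(1-\beta),\dots,\chi(1-\beta^6),\chi(2))$ and remarks afterward that $\chi(2)$ does not affect the outcome, whereas you planned to eliminate $\chi(2)$ first and check $2^6$ cases; either route works.
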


\begin{proof}
  We set $\gamma=\alpha^{(q-1)/26}$ and $B=\langle \gamma\rangle$.
  Remark that $\gamma^2=\beta$.
  There are fifty-six $D_{52}$-orbits on $\binom{B}{3}$ as follows.
  \begin{longtable}{|c|c|c|}
    \hline \endfoot 
    \hline
    representative & length & $\Delta$ \\ 
    \hline \endhead
    $T_1=\{1,\gamma,\gamma^3\}$ & $52$ & $\chi(1-\beta^3)\chi(1-\beta^5)\chi(1-\beta^6)$\\
    $T_2=\{1,\gamma,\gamma^4\}$ & $52$ & $\chi(1-\beta)\chi(1-\beta^2)\chi(1-\beta^3)\chi(1-\beta^5)\chi(1-\beta^6)$\\
    $T_3=\{1,\gamma,\gamma^5\}$ & $52$ & $\chi(1-\beta)\chi(1-\beta^2)\chi(1-\beta^4)\chi(1-\beta^5)\chi(1-\beta^6)$\\
    $T_4=\{1,\gamma,\gamma^6\}$ & $52$ & $\chi(1-\beta)\chi(1-\beta^3)\chi(1-\beta^4)\chi(1-\beta^5)\chi(1-\beta^6)$\\
    $T_5=\{1,\gamma,\gamma^7\}$ & $52$ & $\chi(1-\beta)$\\
    $T_6=\{1,\gamma,\gamma^8\}$ & $52$ & $\chi(1-\beta)\chi(1-\beta^3)\chi(1-\beta^4)$\\
    $T_7=\{1,\gamma,\gamma^9\}$ & $52$ & $\chi(1-\beta)\chi(1-\beta^2)\chi(1-\beta^6)$\\
    $T_8=\{1,\gamma,\gamma^{10}\}$ & $52$ & $\chi(1-\beta)\chi(1-\beta^2)\chi(1-\beta^4)\chi(1-\beta^5)\chi(1-\beta^6)$\\
    $T_9=\{1,\gamma,\gamma^{11}\}$ & $52$ & $\chi(1-\beta^2)\chi(1-\beta^5)\chi(1-\beta^6)$\\
    $T_{10}=\{1,\gamma,\gamma^{12}\}$ & $52$ & $\chi(1-\beta^2)$\\
    $T_{11}=\{1,\gamma,\gamma^{13}\}$ & $52$ & $\chi(2)\chi(1-\beta)$\\
    $T_{12}=\{1,\gamma^2,\gamma^5\}$ & $52$ & $\chi(1-\beta)\chi(1-\beta^3)\chi(1-\beta^4)$\\
    $T_{13}=\{1,\gamma^2,\gamma^6\}$ & $52$ & $\chi(1-\beta)\chi(1-\beta^2)\chi(1-\beta^3)$\\
    $T_{14}=\{1,\gamma^2,\gamma^7\}$ & $52$ & $\chi(1-\beta)\chi(1-\beta^3)\chi(1-\beta^4)\chi(1-\beta^5)\chi(1-\beta^6)$\\
    $T_{15}=\{1,\gamma^2,\gamma^8\}$ & $52$ & $\chi(1-\beta)\chi(1-\beta^3)\chi(1-\beta^4)$\\
    $T_{16}=\{1,\gamma^2,\gamma^9\}$ & $52$ & $\chi(1-\beta)\chi(1-\beta^2)\chi(1-\beta^3)\chi(1-\beta^4)\chi(1-\beta^6)$\\
    $T_{17}=\{1,\gamma^2,\gamma^{10}\}$ & $52$ & $\chi(1-\beta)\chi(1-\beta^4)\chi(1-\beta^5)$\\
    $T_{18}=\{1,\gamma^2,\gamma^{11}\}$ & $52$ & $\chi(1-\beta^4)$\\
    $T_{19}=\{1,\gamma^2,\gamma^{12}\}$ & $52$ & $\chi(1-\beta)\chi(1-\beta^5)\chi(1-\beta^6)$\\
    $T_{20}=\{1,\gamma^2,\gamma^{13}\}$ & $52$ & $\chi(2)\chi(1-\beta^2)$\\
    $T_{21}=\{1,\gamma^3,\gamma^{7}\}$ & $52$ & $\chi(1-\beta^2)\chi(1-\beta^5)\chi(1-\beta^6)$\\
    $T_{22}=\{1,\gamma^3,\gamma^{8}\}$ & $52$ & $\chi(1-\beta^3)$\\
    $T_{23}=\{1,\gamma^3,\gamma^{9}\}$ & $52$ & $\chi(1-\beta^2)\chi(1-\beta^4)\chi(1-\beta^5)$\\
    $T_{24}=\{1,\gamma^3,\gamma^{10}\}$ & $52$ & $\chi(1-\beta^6)$\\
    $T_{25}=\{1,\gamma^3,\gamma^{11}\}$ & $52$ & $\chi(1-\beta)\chi(1-\beta^2)\chi(1-\beta^3)\chi(1-\beta^4)\chi(1-\beta^5)$\\
    $T_{26}=\{1,\gamma^3,\gamma^{12}\}$ & $52$ & $\chi(1-\beta^2)\chi(1-\beta^3)\chi(1-\beta^4)\chi(1-\beta^5)\chi(1-\beta^6)$\\
    $T_{27}=\{1,\gamma^3,\gamma^{13}\}$ & $52$ & $\chi(2)\chi(1-\beta^3)$\\
    $T_{28}=\{1,\gamma^3,\gamma^{14}\}$ & $52$ & $\chi(1-\beta)\chi(1-\beta^2)\chi(1-\beta^3)\chi(1-\beta^5)\chi(1-\beta^6)$\\
    $T_{29}=\{1,\gamma^4,\gamma^9\}$ & $52$ & $\chi(1-\beta^5)$\\
    $T_{30}=\{1,\gamma^4,\gamma^{10}\}$ & $52$ & $\chi(1-\beta^2)\chi(1-\beta^3)\chi(1-\beta^5)$\\
    $T_{31}=\{1,\gamma^4,\gamma^{11}\}$ & $52$ & $\chi(1-\beta)\chi(1-\beta^3)\chi(1-\beta^6)$\\
    $T_{32}=\{1,\gamma^4,\gamma^{12}\}$ & $52$ & $\chi(1-\beta^2)\chi(1-\beta^4)\chi(1-\beta^6)$\\
    $T_{33}=\{1,\gamma^4,\gamma^{13}\}$ & $52$ & $\chi(2)\chi(1-\beta^4)$\\
    $T_{34}=\{1,\gamma^4,\gamma^{14}\}$ & $52$ & $\chi(1-\beta^2)\chi(1-\beta^5)\chi(1-\beta^6)$\\
    $T_{35}=\{1,\gamma^5,\gamma^{11}\}$ & $52$ & $\chi(1-\beta)\chi(1-\beta^2)\chi(1-\beta^3)\chi(1-\beta^4)\chi(1-\beta^5)$\\
    $T_{36}=\{1,\gamma^5,\gamma^{12}\}$ & $52$ & $\chi(1-\beta^3)\chi(1-\beta^4)\chi(1-\beta^5)$\\
    $T_{37}=\{1,\gamma^5,\gamma^{13}\}$ & $52$ & $\chi(2)\chi(1-\beta^5)$\\
    $T_{38}=\{1,\gamma^5,\gamma^{14}\}$ & $52$ & $\chi(1-\beta^2)\chi(1-\beta^5)\chi(1-\beta^6)$\\
    $T_{39}=\{1,\gamma^5,\gamma^{15}\}$ & $52$ & $\chi(1-\beta)\chi(1-\beta^2)\chi(1-\beta^4)$\\
    $T_{40}=\{1,\gamma^6,\gamma^{13}\}$ & $52$ & $\chi(2)\chi(1-\beta^6)$\\
    $T_{41}=\{1,\gamma^6,\gamma^{14}\}$ & $52$ & $\chi(1-\beta^3)\chi(1-\beta^4)\chi(1-\beta^6)$\\
    $T_{42}=\{1,\gamma^6,\gamma^{15}\}$ & $52$ & $\chi(1-\beta)\chi(1-\beta^3)\chi(1-\beta^4)$\\
    $T_{43}=\{1,\gamma^7,\gamma^{15}\}$ & $52$ & $\chi(1-\beta)\chi(1-\beta^2)\chi(1-\beta^3)\chi(1-\beta^4)\chi(1-\beta^6)$\\
    $T_{44}=\{1,\gamma^7,\gamma^{16}\}$ & $52$ & $\chi(1-\beta^2)\chi(1-\beta^3)\chi(1-\beta^4)\chi(1-\beta^5)\chi(1-\beta^6)$\\
    \hline
    $T_{45}=\{1,\gamma,\gamma^{2}\}$ & $26$ & $\chi(1-\beta)$\\
    $T_{46}=\{1,\gamma^2,\gamma^{4}\}$ & $26$ & $\chi(1-\beta^2)$\\
    $T_{47}=\{1,\gamma^3,\gamma^{6}\}$ & $26$ & $\chi(1-\beta^3)$\\
    $T_{48}=\{1,\gamma^4,\gamma^8\}$ & $26$ & $\chi(1-\beta^4)$\\
    $T_{49}=\{1,\gamma^5,\gamma^{10}\}$ & $26$ & $\chi(1-\beta^5)$\\
    $T_{50}=\{1,\gamma^6,\gamma^{12}\}$ & $26$ & $\chi(1-\beta^6)$\\
    $T_{51}=\{1,\gamma^7,\gamma^{14}\}$ & $26$ & $\chi(1-\beta^6)$\\
    $T_{52}=\{1,\gamma^8,\gamma^{16}\}$ & $26$ & $\chi(1-\beta^5)$\\
    $T_{53}=\{1,\gamma^9,\gamma^{18}\}$ & $26$ & $\chi(1-\beta^4)$\\
    $T_{54}=\{1,\gamma^{10},\gamma^{20}\}$ & $26$ & $\chi(1-\beta^3)$\\
    $T_{55}=\{1,\gamma^{11},\gamma^{22}\}$ & $26$ & $\chi(1-\beta^2)$\\
    $T_{56}=\{1,\gamma^{12},\gamma^{24}\}$ & $26$ & $\chi(1-\beta)$\\
    \hline
  \end{longtable}
  The values of $\Delta$ are calculated as in Proposition \ref{prop1.7}.
  Recall that they can be expressed by $\beta$.
  We calculate the value $\sum_{\{x,y,z\}\in\binom{B}{3}}\Delta(\{x,y,z\})$ for every
  $(\chi(1-\beta),\chi(1-\beta^2),\dots,\chi(1-\beta^6),\chi(2))\in \{\pm1\}^7$,
  and conclude the assertion.
  We remark that the value $\chi(2)$ does not affect the results.
\end{proof}

\begin{rem}\label{rem5.4}
  We remark that the sequence  $(\chi(1-\beta),\chi(1-\beta^2),\dots,\chi(1-\beta^6))$ depends
  on the choice of the primitive root $\alpha$.
  For example, when $p=3797$, $\alpha=2,2^3,2^5,2^7$ are primitive roots and the sequences are
  as follows.
  \begin{longtable}{|c|c|}
    \hline \endfoot
    \hline
    $\alpha$ & sequence\\
    \hline \endhead
    $2$ & $(1,1,-1,1,-1,-1)$\\
    $2^3$ & $(-1,-1,1,1,1,-1)$\\
    $2^5$ & $(-1,-1,1,-1,1,1)$\\
    $2^7$ & $(-1,1,-1,1,1,-1)$\\
    \hline
  \end{longtable}
\end{rem}

\subsection*{Acknowledgements}
The first author was supported by JSPS KAKENHI Grant Number JP22K03266.
This work was supported by the Research Institute for Mathematical Sciences,
an International Joint Usage/Research Center located in Kyoto University. 

\bibliographystyle{plain}

\end{document}